\documentclass[12 pt]{article}
\usepackage{amsmath, amsthm, amssymb,enumerate,verbatim,authblk}
\usepackage{tikz}
\usetikzlibrary{shapes}
\usetikzlibrary{calc}
\usetikzlibrary{positioning}
\usepackage{fullpage}
\usepackage{hyperref}
\usepackage{tikz}

\usetikzlibrary{calc}
\usetikzlibrary{positioning}
\usetikzlibrary{decorations.markings}

\allowdisplaybreaks

\usepackage[includeall,
            vmargin={1in,1in},
            hmargin={1in,0.375in}
            ]{geometry}

\title{Tournaments, $4$-uniform hypergraphs, and an exact extremal result} 
\author[1]{Karen Gunderson\thanks{karen.gunderson@umanitoba.ca}\thanks{The first author was supported in part by the Heilbronn Institute for Mathematical Research}}
\author[2]{Jason Semeraro\thanks{js13525@bristol.ac.uk}}
\affilsep=9pt
\affil[1]{\normalsize{University of Manitoba, Department of Mathematics, Winnipeg, Canada}}
\affil[2]{\normalsize{Heilbronn Institute for Mathematical Research, Department of Mathematics, University of Bristol, U.K.}}


\newtheorem{theorem}{Theorem}
\newtheorem{lemma}[theorem]{Lemma}
\newtheorem{proposition}[theorem]{Proposition}

\newtheorem{corollary}[theorem]{Corollary}
\newtheorem{fact}[theorem]{Fact}

\newtheorem*{thm-main1}{Theorem \ref{thm:main}}

\theoremstyle{definition}

\newcommand{\PSL}{\operatorname{PSL}}
\newcommand{\PGL}{\operatorname{PGL}}
\newcommand{\M}{\operatorname{M}}

\newtheorem{definition}[theorem]{Definition}

\newtheorem{question}[theorem]{Question}

\theoremstyle{remark}

\newcommand{\calH}{\mathcal{H}}
\newcommand{\calM}{\mathcal{M}}

\newcommand{\ex}{\operatorname{ex}}
\newtheoremstyle{case}{}{}{\normalfont}{}{\itshape}{\normalfont:}{ }{}


\numberwithin{equation}{section}


\begin{document}

\maketitle

\begin{abstract}
We consider $4$-uniform hypergraphs with the maximum number of hyperedges subject to the condition that every set of $5$ vertices spans either $0$ or exactly $2$ hyperedges and give a construction, using quadratic residues, for an infinite family of such hypergraphs with the maximum number of hyperedges.  Baber has previously given an asymptotically best-possible result using random tournaments.  We give a connection between Baber's result and our construction via Paley tournaments and investigate a `switching' operation on tournaments that preserves hypergraphs arising from this construction.  
\end{abstract}

\section{Introduction}\label{sec:intro}

For any $r\geq 2$, an $r$-uniform hypergraph $\mathcal{H}$, and integer $n$, the \emph{Tur\'{a}n number} for $\mathcal{H}$, denoted $\operatorname{ex}(\mathcal{H}, n)$, is the maximum number of hyperedges in any $r$-uniform hypergraph on $n$ vertices containing no copy of $\mathcal{H}$.  The \emph{Tur\'{a}n density} of $\mathcal{H}$ is defined to be $\pi(\mathcal{H}) = \lim_{n \to \infty} \operatorname{ex}(\mathcal{H}, n) \binom{n}{r}^{-1}$ and a well-known averaging argument shows that this limit always exists.  While the Tur\'{a}n densities of graphs are well-understood and exact Tur\'{a}n numbers are known for some classes of graphs, few exact results for either Tur\'{a}n numbers or densities are known for the cases $r \geq 3$.  The interested reader is directed to the excellent survey by Keevash \cite{pK11} for more details on known Tur\'{a}n results for hypergraphs.

One particular extremal problem, which has a close connection to the Tur\'{a}n density of a particular $3$-uniform hypergraph was considered by Frankl and F\"{u}redi \cite{FF84}.  Recall that a subset of vertices, $X$, in a hypergraph is said to span an edge $A$ if $A \subseteq X$.  In their paper, Frankl and F\"{u}redi considered those $3$-uniform hypergraphs that have the property that any set of $4$ vertices span either $0$ or exactly $2$ hyperedges.   They proved the exact and strongly structural result that any such hypergraph is in one of two classes: either it is the blow-up of a fixed $3$-graph on $6$ vertices with $10$ edges, or else it is isomorphic to a hypergraph obtained by taking vertices as points around a unit circle and then letting the hyperedges be those triples whose convex hull contains the origin (assuming that no two points lie on a line containing the origin).

This result by Frankl and F\"{u}redi has an application to the Tur\'{a}n density of a small $3$-uniform hypergraph.  Let $K_4^-$ be the $3$-uniform hypergraph on $4$ vertices with $3$ edges.  Using a recursive construction based on the $3$-uniform hypergraph on $6$ vertices mentioned above, Frankl and F\"{u}redi show that $\pi(K_4^-) \geq \frac{2}{7}$.    Using flag algebra techniques, Baber and Talbot \cite{BT11} showed that the Tur\'{a}n density for $K_4^-$ is at most $0.2871$.  It is conjectured in this case that the exact value is $2/7$ (\emph{e.g.} \cite{dM03}).  Many further results on extremal numbers for small cliques in hypergraphs can be found, for example, in \cite{BT12, EFR86, F-RV12, FF87, LZ09, MM62, aR10, aS97, jT07}.

In the same paper, Frankl and F\"{u}redi ask about the following more general question.  For any $r \geq 4$, what is the maximum number of hyperedges in an $r$-uniform hypergraph with the property that any set of $r+1$ vertices span 0 or 2 edges?  They point out that the construction given by points on a circle can be generalized to points on a sphere in $r-1$ dimensions with hyperedges being simplices containing the origin. Frankl and F\"{u}redi note that such an $r$-uniform hypergraph has at most $\binom{n}{r}2^{-r+1}(1+o(1))$ hyperedges for $n$ tending to infinity and that a random choice of vertices on the sphere gives this number of hyperedges. 

In this paper, we consider their question in the case $r=4$ and give a construction for an infinite family of $4$-uniform hypergraphs with the property that every set of $5$ vertices spans either $0$ or $2$ hyperedges with the maximum number of hyperedges among all such hypergraphs on the same number of vertices.  One of the main results of this paper is Theorem \ref{thm:main-const} below, whose proof appears in Section \ref{sec:design}. Before stating this result, we introduce and recall some terminology. Firstly, a 4-uniform hypergraph is defined to be \textit{3-transitive} if its automorphism group acts 3-transitively on the vertex set. (An action of a group $G$ on a set $V$ is \textit{$3$-transitive} if given two 3-tuples $(x,y,z)$ and $(x',y',z')$ of elements of $V$ there exists $g \in G$ such that $(xg,yg,zg)=(x',y',z')$). Next, recall that $\PGL(2,q)$ is the group of equivalence classes $[A]_\sim$ of invertible $2 \times 2$ matrices $A$ defined over the field $\mathbb{F}_q$ where $A \sim B$ if there exists $\lambda \in \mathbb{F}_q^\times$ such that $A=\lambda B$ and where $[A]_\sim [B]_\sim := [AB]_\sim$. Finally, we recall that a $t-(n,r,\lambda)$ design $\calH$ is an $r$-uniform hypergraph defined over a vertex set of size $n$ with the property that any set of $t$ vertices appear together in exactly $\lambda$ hyperedges of $\calH$.

\begin{theorem}\label{thm:main-const}
For each prime power $q \equiv 3 \pmod 4$ there exists a $3-(q+1,4,\frac{q+1}{4})$ design denoted $\calH_q$ on $q+1$ vertices with the following properties:
\begin{itemize}
\item[(a)] any set of $5$ vertices spans 0 or 2 edges;
\item[(b)] $\calH_q$ has $\frac{q+1}{16}\binom{q+1}{3}$ hyperedges; and
\item[(c)] $\calH_q$ is $3$-transitive with a subgroup of its automorphism group isomorphic to $\PGL(2,q)$.
\end{itemize}
\end{theorem}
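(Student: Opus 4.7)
The plan is to construct $\calH_q$ as a $PGL(2,q)$-invariant family defined by a cross-ratio condition on quadratic characters. For a 4-subset $\{a,b,c,d\} \subset \mathbb{P}^1(\mathbb{F}_q) = \mathbb{F}_q \cup \{\infty\}$ with cross-ratio $\lambda$ (computed in any labeling), I declare $\{a,b,c,d\} \in \calH_q$ if and only if $\chi(\lambda) = \chi(1-\lambda) = -1$, where $\chi$ is the quadratic character on $\mathbb{F}_q$. Using $\chi(1/\lambda) = \chi(\lambda)$ together with $\chi(-1) = -1$ (valid because $q \equiv 3 \pmod 4$), a direct check shows this condition is invariant under the anharmonic $S_3$-action $\lambda \mapsto 1-\lambda, 1/\lambda$ on cross-ratios, so the definition is independent of labeling. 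Part (c) is then immediate: cross-ratios are $PGL(2,q)$-invariant, hence so is $\calH_q$, and $PGL(2,q)$ is sharply 3-transitive on $\mathbb{P}^1(\mathbb{F}_q)$ by the classical theory.

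For (b), 3-transitivity forces every 3-subset of $\mathbb{P}^1(\mathbb{F}_q)$ to lie in the same number of blocks, say $\lambda_3$. I would count $\lambda_3$ as the number of $\beta \in \mathbb{F}_q \setminus \{0,1\}$ with $\{0,1,\infty,\beta\} \in \calH_q$, that is, with $\chi(\beta) = \chi(1-\beta) = -1$. Expanding
\[
\#\{\beta \neq 0,1 : \chi(\beta) = \chi(1-\beta) = -1\} = \tfrac{1}{4}\sum_{\beta \neq 0,1} (1-\chi(\beta))(1-\chi(1-\beta))
\]
and applying $\sum_{\beta \neq 0,1} \chi(\beta) = -1$ and the standard Jacobi sum identity $\sum_{\beta} \chi(\beta)\chi(1-\beta) = -\chi(-1) = 1$ (for $q \equiv 3 \pmod 4$) yields $\lambda_3 = (q+1)/4$. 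The total block count $\frac{q+1}{16}\binom{q+1}{3}$ follows by double counting.

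Part (a) is the real work. By 3-transitivity I may assume the 5-subset is $\{0,1,\infty,\mu,\nu\}$ for distinct $\mu, \nu \in \mathbb{F}_q \setminus \{0,1\}$, and the cross-ratios of the five 4-subsets (up to $S_3$-orbit) are
\[
\mu, \quad \nu, \quad \frac{\nu(1-\mu)}{\nu-\mu}, \quad \frac{\nu}{\mu}, \quad \frac{\nu-1}{\mu-1}.
\]
Writing $a = \chi(\mu), b = \chi(\nu), c = \chi(1-\mu), d = \chi(1-\nu), s = \chi(\nu-\mu) \in \{\pm 1\}$, each membership condition translates into a Boolean condition on $(a,b,c,d,s)$. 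For instance, $\{0,1,\infty,\mu\} \in \calH_q$ iff $a = c = -1$, while $\{0,\infty,\mu,\nu\} \in \calH_q$ iff $ab = -1$ and $as = 1$; analogous conditions for the other three 4-subsets follow from the same kind of calculation together with $\chi(-1) = -1$. Verifying (a) then reduces to checking that, over the $2^5 = 32$ sign patterns, the number of satisfied conditions is always $0$ or $2$. The symmetry $(\mu,\nu) \leftrightarrow (\nu,\mu)$, which induces $(a,b,c,d,s) \mapsto (b,a,d,c,-s)$ on the sign variables and pairs the first condition with the second and the fourth with the fifth, halves the work. The main obstacle is this final combinatorial check: it would be desirable to replace it with an explicit involution pairing off the $\calH_q$-blocks inside each 5-subset, but I do not see a canonical such pairing, and the direct enumeration suffices.
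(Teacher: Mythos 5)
Your construction coincides with the paper's $\calH_q$, and your proof is correct; the route is essentially the paper's, differing only in how part (a) is organized. For the construction itself, the paper's function $S(a,b,c,d)=\chi\left(D(a,b)D(b,c)D(c,d)D(a,d)\right)$ is (up to the sign $\chi(-1)=-1$) the quadratic character of a cross-ratio of the four points, and its Lemma \ref{lem:relabel}, which shows that among $S(a,b,c,d)$, $S(a,b,d,c)$, $S(a,c,b,d)$ an even number are $+1$, is precisely your observation that $\chi(\lambda)=\chi(1-\lambda)=-1$ forces $\chi(\lambda/(\lambda-1))=-1$, i.e.\ your anharmonic $S_3$-invariance. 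Parts (b) and (c) are proved identically in the paper: $PGL(2,q)$-invariance plus $3$-transitivity, then the character-sum evaluation $\frac14\sum_{\beta\neq 0,1}(1-\chi(\beta))(1-\chi(1-\beta))=(q+1)/4$ via the same Jacobi-sum identity. For part (a), you normalize the $5$-set to $\{0,1,\infty,\mu,\nu\}$ and reduce to a Boolean check on the five signs $\chi(\mu),\chi(\nu),\chi(1-\mu),\chi(1-\nu),\chi(\nu-\mu)$ over all $32$ patterns; I confirm this check closes (for every sign pattern, realizable or not, exactly $0$ or $2$ of the five membership conditions hold), so your argument is complete once the table is written out, and the symmetry $(\mu,\nu)\leftrightarrow(\nu,\mu)$ you note does halve it. The paper avoids the enumeration: it first uses the identity $S(a,b,c,d)S(b,c,e,d)S(a,d,b,e)S(a,e,d,c)S(a,b,e,c)=+1$ (each pair of points contributes twice to the product) to rule out all five $4$-subsets being edges, then assumes one edge and one non-edge, normalizes, and observes that this pins down four of your five signs, leaving a clean two-case split on $\chi(\nu-\mu)$ in which exactly one further edge appears. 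The paper's version is shorter and explains structurally why the edge count in a $5$-set is even; yours is more mechanical but requires no insight beyond the translation into signs and is easy to certify. Either way the theorem is established.
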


A straightforward modification of the proof of an upper bound on Tur\'{a}n numbers due to de Caen \cite{dC83} shows that any $4$-uniform hypergraph on $n$ vertices in which every $5$ vertices span either $0$ or $2$ hyperedges has at most $\frac{n}{16}\binom{n}{3}$ hyperedges (see Section \ref{sec:ub}).  Furthermore, any such hypergraph attaining the maximum number of hyperedges is a $3$-design.  Designs arising from both the groups $\PGL(2, q)$ and $\PSL(2, q)$ (the subgroup of $\PGL(2,q)$ consisting of classes of matrices with determinant 1) have been examined in a number of previous works, including papers by Cusack, Graham, and Kreher \cite{CGK95}, by Cameron, Omidi, and Tayfeh-Rezaie \cite{COT-R06}, and by Cameron, Maimani, Omidi, and Tayfeh-Rezaie \cite{CMOT-R06}.  To the best of our knowledge, the hypergraph in Theorem \ref{thm:main-const} has not been previously studied.

The proofs of the upper bounds on the number of hyperedges in the class of hypergraphs of interest are given, for completeness, in Section \ref{sec:ub}.  This shows that the hypergraphs constructed in Theorem \ref{thm:main-const} are extremal.  Furthermore, the same upper bound on the number of hyperedges applies for hypergraphs in which every set of $5$ vertices contains \emph{at most} $2$ hyperedges.  Thus, the upper bound, together with Theorem \ref{thm:main-const} imply the following result on Tur\'{a}n numbers for the $4$-uniform hypergraph on $5$ vertices with $3$ hyperedges, which is unique up to isomorphism.

\begin{corollary}\label{cor:ex-num}
For any prime power $q \equiv 3\pmod{4}$,
\[
\ex(q+1, \{1234, 1235, 1245\}) = \frac{(q+1)}{16} \binom{q+1}{3}.
\]
\end{corollary}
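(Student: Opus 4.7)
The plan is to deduce the corollary by combining two results that are either already established (Theorem \ref{thm:main-const}) or proved elsewhere in the paper (Section \ref{sec:ub}). First I would observe that the forbidden configuration $\{1234,1235,1245\}$ is, up to isomorphism, the unique 4-uniform hypergraph on 5 vertices with 3 hyperedges; choosing 3 of the 5 possible 4-subsets of a 5-set is equivalent to choosing 2 vertices whose 4-element complement is not selected, and since $S_5$ acts transitively on 2-subsets there is only one such configuration up to relabelling. Consequently, a 4-uniform hypergraph is $\{1234,1235,1245\}$-free if and only if every 5-set of its vertices spans at most 2 hyperedges.

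For the lower bound I would exhibit the hypergraph $\calH_q$ of Theorem \ref{thm:main-const}: part (a) asserts that every 5-set in $\calH_q$ spans 0 or 2 edges, hence at most 2, so $\calH_q$ is $\{1234,1235,1245\}$-free, and part (b) gives it exactly $\frac{q+1}{16}\binom{q+1}{3}$ edges. This yields
\[
\ex(q+1, \{1234,1235,1245\}) \geq \frac{q+1}{16}\binom{q+1}{3}.
\]

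For the matching upper bound I would invoke the modified de Caen counting argument from Section \ref{sec:ub}, which states that any 4-uniform hypergraph on $n$ vertices whose every 5-set spans at most 2 edges has at most $\frac{n}{16}\binom{n}{3}$ edges. Specialising to $n = q+1$ and combining with the lower bound gives equality.

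Because the corollary is a formal consequence of two results that are established elsewhere, there is no substantive obstacle inside the proof itself; the real work lies in constructing $\calH_q$ (Theorem \ref{thm:main-const}) and in the de Caen-type counting bound of Section \ref{sec:ub}, both of which may be assumed here.
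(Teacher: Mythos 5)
Your proposal is correct and follows exactly the paper's route: the lower bound comes from the construction $\calH_q$ of Theorem \ref{thm:main-const}(a),(b), the upper bound from the de Caen-type count of Proposition \ref{prop:ub0or2} with $r=4$, and the bridge between the two is the observation (also made in the paper) that $\{1234,1235,1245\}$ is the unique $4$-uniform hypergraph on $5$ vertices with $3$ edges, so being free of it is equivalent to every $5$-set spanning at most $2$ hyperedges. No differences worth noting.
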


For fixed $n$, $\{1234, 1235, 1245\}$-free $4$-uniform hypergraphs on $n$ vertices with exactly $\ex(n, \{1234, 1235, 1245\})$ hyperedges need not be unique.  Hughes \cite{dH65} considered certain designs arising from groups and showed that there is a $3-(12, 4, 3)$ design with $165$ blocks associated with the Mathieu group $\M_{11}$ which has $\M_{11}$ as a group of automorphisms.  In Section \ref{sec:M11}, this design is examined and the following properties are shown.

\begin{theorem}\label{thm:M11}
There exists a $3-(12,4,3)$ design $\calM$ on $12$ vertices with the following properties:
\begin{itemize}
\item[(a)] any set of $5$ vertices spans 0 or 2 edges;
\item[(b)] $\calM$ has $165$ hyperedges;
\item[(c)] $\calM$ is 3-transitive with a subgroup of its automorphisms isomorphic to $\M_{11}$;
\end{itemize}
\end{theorem}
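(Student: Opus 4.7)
The plan is to construct $\calM$ as a single orbit of the $3$-transitive action of $M_{11}$ on a $12$-element set. Concretely, identify the $12$ vertices with the cosets of a subgroup $H \cong \mathrm{PSL}(2,11)$ of index $12$ in $M_{11}$; since $|M_{11}| = 7920 = 12 \cdot 11 \cdot 10 \cdot 6$, the resulting action is $3$-transitive with $3$-point stabiliser of order $6$. Hughes~\cite{dH65} exhibits a $4$-subset $B_0$ whose setwise stabiliser in $M_{11}$ has order $48$, and $\calM$ is taken to be the $M_{11}$-orbit of $B_0$, which has size $7920/48 = 165$. Property (c) is then immediate from the construction.

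For (b), since $M_{11}$ acts $3$-transitively on the vertex set and $\calM$ is a single $M_{11}$-orbit, every triple of vertices is contained in the same number $\lambda$ of blocks, so $\calM$ is a $3$-$(12,4,\lambda)$ design. Double counting (triple, block) pairs yields $\lambda \binom{12}{3} = |\calM| \binom{4}{3}$, hence $\lambda = 3$ and the hyperedge count is $|\calM| = 165$.

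The substantive assertion is (a). For each $5$-subset $S$ of the vertex set, put $f(S) = |\{B \in \calM : B \subseteq S\}|$; since $\calM$ is $M_{11}$-invariant, $f$ is constant on $M_{11}$-orbits of $5$-subsets, so it suffices to verify $f(S) \in \{0,2\}$ on one representative from each orbit. As a consistency check, double counting (block, $5$-superset) pairs gives $\sum_S f(S) = 165 \cdot 8 = 1320$, so if (a) holds then exactly $660$ of the $792$ $5$-subsets contain two blocks and $132$ contain none.

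The main obstacle is the orbit enumeration and case check itself, since parameter counting alone cannot rule out $f(S) \in \{1,3,4,5\}$. I would work in an explicit model, for example $\mathrm{PSL}(2,11)$ on $\mathbb{F}_{11} \cup \{\infty\}$ extended by a permutation realising $M_{11} \supset \mathrm{PSL}(2,11)$, enumerate $M_{11}$-orbit representatives on $5$-subsets using the high transitivity of $M_{11}$ to shorten the list (the $3$-point stabiliser has order $6$ and acts on the remaining $9$ vertices, so the orbits on $5$-subsets are classified by its orbits on $2$-subsets of the complement), and for each representative compute $f(S)$ by hand or via a short computer verification in \textsc{GAP} or \textsc{Magma}.
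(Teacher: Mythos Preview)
Your construction of $\calM$ as an $M_{11}$-orbit on $4$-subsets, and your derivations of (b) and (c) from $3$-transitivity and double counting, match the paper's treatment exactly. Your argument for (a) is also valid, and indeed the paper acknowledges that direct verification is one legitimate route (``One can verify directly\ldots'').

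However, the proof the paper actually \emph{presents} for (a) is different and avoids orbit enumeration entirely. It invokes a result of Devillers, Giudici, Li, and Praeger on the block-intersection graph $\Gamma$ of $\calM$ (vertices are blocks, edges join blocks meeting in three points): $\Gamma$ is $8$-regular and any two triangles in $\Gamma$ share at most one vertex. From this the paper argues combinatorially (Proposition~\ref{prop:m11-0or2}): given a block $\{a,b,c,d\}$, its eight $\Gamma$-neighbours are $\{a,b,c,x_1\},\{a,b,c,x_2\},\ldots,\{b,c,d,x_8\}$, and the triangle-intersection condition forces $x_1,\ldots,x_8$ to be pairwise distinct and disjoint from $\{a,b,c,d\}$, hence they are precisely the eight remaining vertices. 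So for any fifth vertex $e$ there is exactly one further block in $\{a,b,c,d,e\}$.

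The trade-off is clear: your approach is self-contained modulo an explicit model of $M_{11}$ and a finite check (and you correctly isolate the obstacle, namely that parameter counting cannot exclude $f(S)\in\{1,3,4,5\}$), whereas the paper's approach is case-free and conceptual but imports a structural theorem from \cite{DGHP}. Either is acceptable; the paper's route has the additional virtue that the same graph $\Gamma$ later reappears (via \cite{DGHP}) to identify $N_{\calM}(u,v)$ with the Petersen graph, which is used to show $\calM$ is not of the form $\calH_T$.
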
 

The hyperedges of the hypergraph $\mathcal{M}$ are listed in the Appendix for the interested reader.  One can show that the hypergraph $\mathcal{M}$ is not isomorphic to the hypergraph $\mathcal{H}_{11}$.  In particular, using SAGE \cite{wS15}, it was verified by direct examination that $\mathcal{H}_{11}$ has the property that every set of $6$ vertices contains at least one hyperedge and that $\mathcal{M}$ contains $22$ independent sets of size $6$.

The extremal numbers given in Corollary \ref{cor:ex-num} give a new proof of the fact that the Tur\'{a}n density for the $5$ vertex hypergraph with $3$ edges is
\begin{equation}\label{eq:density}
	\pi(\{1234, 1235, 1245\}) = \frac{1}{4}.
\end{equation} The first proof for the Tur\'{a}n density in Equation \eqref{eq:density} was given by Baber \cite{rB} who proved the lower bound using the following construction based on random tournaments.  Let $T$ be any random tournament on $n$ vertices and construct a $4$-uniform hypergraph $\mathcal{H}_T$ by taking all sets of four vertices $\{a, b, c, d\}$ with the property that the edges are directed $a \to b$, $b \to c$ and $c \to a$ and such that the edges between $d$ and $a$, $b$, or $c$ are either all directed towards $d$ or all directed away from $d$.  The two possible sub-tournaments that give hyperedges are shown in Figure \ref{fig:tourn}. One can check that $\mathcal{H}_T$ has the property that any $5$ vertices contain at most two hyperedges and that, in fact, every $5$ vertices span $0$ or $2$ hyperedges.

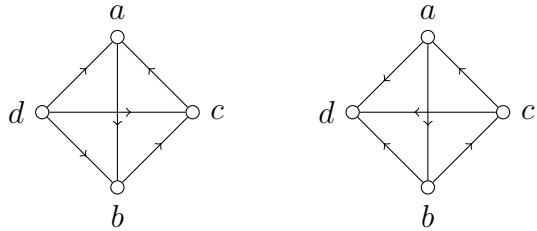
\begin{figure}[htb]
\begin{center}
\begin{tikzpicture}
	[decoration={markings, mark=at position 0.6 with {\arrow{>}}}] 
	\tikzstyle{vertex}=[circle, draw=black,  minimum size=5pt,inner sep=0pt]
	
	\node[vertex, label=left:{$d$}] at (-1, 0) (x) {};
	\node[vertex, label=above:{$a$}] at (0, 1) (y) {};
	\node[vertex, label=below:{$b$}] at (0, -1) (z) {};
	\node[vertex, label=right:{$c$}] at (1, 0) (w) {};
	
	\draw[postaction={decorate}] (y) -- (z);
	\draw[postaction={decorate}] (z) -- (w);
	\draw[postaction={decorate}] (w) -- (y);
	\draw[postaction={decorate}] (x) -- (y);
	\draw[postaction={decorate}] (x) -- (z);
	\draw[postaction={decorate}] (x) -- (w);
\end{tikzpicture} \hspace{20pt}
\begin{tikzpicture}
	[decoration={markings, mark=at position 0.6 with {\arrow{>}}}] 
	\tikzstyle{vertex}=[circle, draw=black,  minimum size=5pt,inner sep=0pt]
	
	\node[vertex, label=left:{$d$}] at (-1, 0) (x) {};
	\node[vertex, label=above:{$a$}] at (0, 1) (y) {};
	\node[vertex, label=below:{$b$}] at (0, -1) (z) {};
	\node[vertex, label=right:{$c$}] at (1, 0) (w) {};
	
	\draw[postaction={decorate}] (y) -- (z);
	\draw[postaction={decorate}] (z) -- (w);
	\draw[postaction={decorate}] (w) -- (y);
	\draw[postaction={decorate}] (y) -- (x);
	\draw[postaction={decorate}] (z) -- (x);
	\draw[postaction={decorate}] (w) -- (x);
\end{tikzpicture} 
\end{center}
\caption{Possible edge directions for hyperedge}
\label{fig:tourn}
\end{figure}

The expected number of such hyperedges given by a random tournament on $n$ vertices is $\frac{1}{4}\binom{n}{4}$ and so for every $n$, there is such a tournament yielding a hypergraph with at least this many hyperedges.

In fact, since the construction yields hypergraphs with every $5$ vertices spanning exactly $0$ or $2$ hyperedges, it also shows that
\[
\limsup_{n \to \infty} \left( \frac{\max\{e(\mathcal{H}) \mid |V(\mathcal{H})| = n \text{ and every $5$-set spans $0$ or $2$ edges}\}}{\binom{n}{4}} \right)= \frac{1}{4}.
\]

The Paley tournaments are a collection of tournaments that exhibit some pseudo-random properties (see, for example~\cite{nA95}).  In Section \ref{sec:tourn}, we give a construction starting from a Paley tournament showing that the hypergraphs $\mathcal{H}_q$ from Theorem \ref{thm:main-const} can be represented by tournaments using Baber's construction.  Furthermore, by examining a `switching' operation on tournaments that preserves the resulting hypergraphs, we give easily verifiable necessary conditions for a $4$-uniform hypergraph to be represented in this form.  In particular, we show that a hypergraph $\calH$ with the property that any set of $5$ vertices spans 0 or 2 edges cannot be realized using a tournament whenever it contains a pair of vertices $u,v$ such that $N_\calH(u,v)$ contains an odd cycle (see Proposition \ref{p:oddcycle}).  This is used to show that not only is the hypergraph $\mathcal{M}$ from Theorem \ref{thm:M11} not equal to $\mathcal{H}_{11}$, but that it also cannot be represented as $\mathcal{H}_T$ for any tournament on $12$ vertices.
 
The structure of the remainder of the paper is as follows.  In Section \ref{sec:design}, the construction of the family of $4$-uniform hypergraphs described in Theorem \ref{thm:main-const} is given.  In Lemma \ref{lem:3tran}, it is shown that the hypergraphs are $3$-transitive and that there is a subgroup of automorphisms isomorphic to $\PGL(2, q)$.  In Theorem \ref{thm:paley0or2}, it is shown that for each of the constructed hypergraphs, every set of $5$ vertices spans either $0$ or $2$ hyperedges.  In Theorem \ref{thm:paley-edge-count}, it is shown that the hypergraphs $\mathcal{H}_q$ are $3$-designs and it is shown that for every $q$, the hypergraph $\mathcal{H}_q$ has exactly $\frac{(q+1)}{16}\binom{q+1}{3}$ hyperedges. 

In Section \ref{sec:M11}, the properties of the hypergraph $\mathcal{M}$ described in Theorem \ref{thm:M11} are given. 

In Section \ref{sec:ub}, the modification of de Caen's counting argument is given to prove an upper bound on the number of hyperedges for $r$-uniform hypergraphs in which any set of $r+1$ vertices spans either exactly $0$ or $2$ hyperedges or else at most $2$ hyperedges. 

In Section \ref{sec:tourn}, Baber's construction for hypergraphs from tournaments is examined and it is shown in Theorem \ref{thm:isotourn} that for every $q$, there is a tournament $T^*(q)$ for which $\mathcal{H}_q = \mathcal{H}_{T^*(q)}$.  We formally define what it means for two tournaments $T_1$ and $T_2$ to be switching equivalent in Definition \ref{def:switch}. Some implications of this switching operation are given for the hypergraphs resulting from tournaments and these are used to show that the hypergraph $\mathcal{M}$ does not arise from a tournament.

Finally, in Section \ref{sec:open}, we note some remaining open problems and further possible directions.

\section{Paley hypergraphs}\label{sec:design}

Throughout, let $q=p^\ell$ be a prime power with $q \equiv 3 \pmod 4$. The purpose of this section is to construct a 4-uniform hypergraph $\mathcal{H}_q$ on $q+1$ vertices with $\frac{(q+1)}{16} \binom{q+1}{3}$ edges and with the property that every set of 5 vertices spans exactly 0 or exactly 2 hyperedges.  This construction uses the projective line over a finite field.

The hypergraphs given by the construction in Definition~\ref{def:paley-hgraph} are called Paley $4$-graphs because of the hyperedges are defined in terms of the quadratic residue properties of their vertices and because of the connection to the usual Paley tournaments that is described in Section~\ref{sec:tourn}.  Several other generalizations of the Paley graph construction to hypergraphs have been given previously.  Chung and Graham~\cite{CG90} define $k$-graphs on vertex set $\{1, 2, \ldots, p\}$ whose hyperedges are those $k$-sets whose sum is a quadratic residue modulo $p$ as an example of a `quasi-random hypergraph'.  Alon~\cite{nA90} gave a construction for $k$-uniform hypergraphs in terms of quadratic non-residues.  Poto\v{c}nik and \v{S}ajna~\cite{PS09} gave a construction for $k$-uniform hypergraphs on $n$ vertices for any prime power $n$ in which edges are defined by the coset in the multiplicative group, $\mathbb{F}_n^*$, of the van der Monde determinant of the $k$-sets.  In the case where $k = 4$, this construction requires that $n \equiv 1 \pmod{8}$.  The hypergraphs constructed in this section are, in general, different from any of these others given previously in the literature.

\begin{definition}
Define an equivalence relation, $\sim$, on $\mathbb{F}_q^2\setminus\{(0,0)\}$ by $(a, b) \sim (c, d)$ if{f} there exists $\lambda \in \mathbb{F}_q\setminus\{0\}$ with $(a, b) = (\lambda c, \lambda d)$.  The \emph{projective line $\mathbb{P}^1\mathbb{F}_q$} is the set of equivalence classes of $\mathbb{F}_q^2 \setminus \{(0,0)\}$ under this equivalence relation. We write $[a:b]$ for the equivalence class of the point $(a, b)$ so that:
\[
\mathbb{P}^1\mathbb{F}_q = \{[x:1] \mid x \in \mathbb{F}_q\} \cup \{[1:0]\}.
\]
\end{definition}

\begin{definition}
Define $D: \mathbb{F}_q^2 \times \mathbb{F}_q^2 \to \mathbb{F}_q$ by
\[
D\left((u_1, u_2), (v_1, v_2) \right)  = \begin{vmatrix} u_1 & v_1\\ u_2 & v_2\end{vmatrix}= u_1v_2 - u_2 v_1,
\]
called the \emph{determinant of $(u_1, u_2)$ and $(v_1, v_2)$}.
\end{definition}

Note that the determinant is not constant on the equivalence classes that define the projective line. Next, let $\chi: \mathbb{F}_q \to \{-1, 0, +1\}$, be the square character of the multiplicative group $\mathbb{F}_q$. Explicitly,
\[
\chi(x) = \begin{cases}
		0 	&\text{if } x=0\\
		+1	&\text{if $x$ is a square in $\mathbb{F}_q$, and}\\
		-1	&\text{otherwise.}
	\end{cases}
\]
Since $\chi$ is a linear character, it is multiplicative. Also note that $\chi(-1) = -1$ since $q \equiv 3 \pmod{4}$.

\begin{definition}\label{def:S}
Define the function $S: (\mathbb{F}_q^2)^4 \to \{-1, 0, 1\}$ by
\[
S(a, b, c, d) = \chi\left(D(a, b) D(b, c) D(c, d) D(d, a)\right)
\]
As the function $S$ is constant over equivalence classes of $\mathbb{F}_q^2$, it extends in the natural way to $(\mathbb{P}^1 \mathbb{F}_q)^4$.
\end{definition}

With this definition, the key construction in this paper can now be given.

\begin{definition}[Paley $4$-graph]\label{def:paley-hgraph}
Define $\mathcal{H}_q$ to be the $4$-uniform hypergraph on vertex set $\mathbb{P}^1\mathbb{F}_q$ where $\{a, b, c, d\}$ is a hyperedge if{f} for every permutation $\pi$ of  $\{a, b, c, d\}$,
\[
	S(\pi(a), \pi(b), \pi(c), \pi(d)) = -1.
\]
\end{definition}

The following lemma gives a useful criterion for testing whether or not a given set of 4 vertices lies in $\mathcal{H}_q$.

\begin{lemma}\label{lem:relabel}
For every four distinct points $a, b, c, d \in \mathbb{P}^1\mathbb{F}_q$, the set $\{a, b, c, d\}$ is a hyperedge in $\mathcal{H}_q$ if{f}
\[
S(a, b, c, d) = S(a, b, d, c) = S(a, c, b, d) = -1
\]
and $\{a, b, c, d\} \notin \mathcal{H}_q$ if{f} among $S(a, b, c, d), S(a, b, d, c)$, and $S(a, c, b, d)$ exactly one is $-1$ and the other two are $+1$.
\end{lemma}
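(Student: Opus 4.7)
The plan is to reduce the $24$ conditions $S(\pi(a),\pi(b),\pi(c),\pi(d)) = -1$ defining membership in $\mathcal{H}_q$ via Definition~\ref{def:paley-hgraph} to only three conditions, by exploiting an invariance of $S$ under the dihedral group $D_4 \le S_4$, and then to pin the three remaining values down by a multiplicative identity forced by the antisymmetry of $D$.

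First I would verify that $S$ is constant on each $D_4$-orbit. For the cyclic shift $(a,b,c,d) \mapsto (b,c,d,a)$, expanding
\[
S(b,c,d,a) = \chi\bigl(D(b,c) D(c,d) D(d,a) D(b,a)\bigr)
\]
and applying $D(d,a) = -D(a,d)$ and $D(b,a) = -D(a,b)$ introduces two sign flips that cancel; for the reversal $(a,b,c,d) \mapsto (d,c,b,a)$, all four determinants flip and $(-1)^4 = 1$. These two transformations generate a dihedral group of order $8$, so the $24$ orderings of $\{a,b,c,d\}$ fall into exactly $3$ orbits; using the fact that each $D_4$-orbit is determined by the underlying cyclic adjacency structure, one checks that $(a,b,c,d), (a,b,d,c), (a,c,b,d)$ lie in distinct orbits. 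Write $s_1, s_2, s_3$ for the corresponding values of $S$.

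Second, since $a,b,c,d$ are pairwise distinct in $\mathbb{P}^1 \mathbb{F}_q$, every $D(u,v)$ is nonzero (vanishing of $D(u,v)$ is equivalent to projective equivalence). Hence each $s_i \in \{-1,+1\}$, and by $D_4$-invariance the hyperedge condition reduces to $s_1 = s_2 = s_3 = -1$.

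The crux is a sign computation showing $s_1 s_2 s_3 = -1$. By multiplicativity of $\chi$ this equals $\chi$ of a product of twelve determinant factors, which I would collect by unordered pair. Each pair $\{u,v\} \subset \{a,b,c,d\}$ occurs exactly twice across the three $s_i$'s; for some pairs both occurrences use the same orientation and yield a perfect square $D(u,v)^2$, while for the remaining pairs the occurrences are oppositely oriented, yielding $D(u,v) D(v,u) = -D(u,v)^2$ via antisymmetry. Careful accounting of these signs, together with $\chi(-1) = -1$ (which holds because $q \equiv 3 \pmod 4$), gives $s_1 s_2 s_3 = -1$.

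Combining $s_i \in \{-1,+1\}$ with this constraint leaves exactly the two configurations stated in the lemma: either all three equal $-1$, whence $\{a,b,c,d\}$ is a hyperedge by $D_4$-invariance; or exactly one equals $-1$, whence some permutation of $(a,b,c,d)$ yields $S = +1$ and the hyperedge condition fails. The main obstacle is the sign bookkeeping in the third step, where one must carefully track the distribution of pair orientations across the three orbit representatives and combine it with $\chi(-1) = -1$ to pin down the overall sign.
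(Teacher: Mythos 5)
Your outline follows the same two-step strategy as the paper: reduce the $24$ permutations to three orbit representatives by symmetry, then show the three values multiply to $-1$. The symmetry reduction is fine, and in fact you justify it more carefully than the paper does (cyclic invariance alone gives six orbits of size four; you correctly add the reversal to get three orbits of size eight, and correctly identify the three orbits with the three ways of pairing off ``opposite'' vertices). Your observation that $D(u,v)\neq 0$ for distinct projective points, so that each $s_i\in\{\pm1\}$, is also a needed point that the paper leaves implicit.

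The gap is in the step you yourself call the crux: you assert that the bookkeeping ``gives $s_1s_2s_3=-1$'' without performing it, and as you have set it up it does not. You expand $S(b,c,d,a)$ with last factor $D(b,a)$, i.e.\ you are using $S(x_1,x_2,x_3,x_4)=\chi\bigl(D(x_1,x_2)D(x_2,x_3)D(x_3,x_4)D(x_1,x_4)\bigr)$ as in the displayed Definition. Under that convention the twelve factors of $s_1s_2s_3$ pair up as follows: $\{a,b\}$, $\{a,c\}$, $\{a,d\}$ and $\{b,d\}$ each occur twice with the \emph{same} orientation, and only $\{b,c\}$ and $\{c,d\}$ occur with opposite orientations, so the product inside $\chi$ is $(-1)^2$ times a nonzero square and $s_1s_2s_3=+1$, not $-1$. (Indeed, under that literal convention no $4$-set could ever satisfy $s_1=s_2=s_3=-1$.) The identity you need holds for the fully cyclic form $\chi\bigl(D(x_1,x_2)D(x_2,x_3)D(x_3,x_4)D(x_4,x_1)\bigr)$, which differs from the displayed definition by $\chi(-1)=-1$ and is the form the paper actually manipulates here and in every later computation: there the oppositely oriented pairs are $\{a,c\}$, $\{b,c\}$ and $\{c,d\}$, an odd number, giving $s_1s_2s_3=\chi(-1)=-1$. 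Since the answer flips with this choice, you must fix the convention and actually carry out the count. Note also that the paper reaches the same constraint by a shorter route: it multiplies only $S(a,b,c,d)S(a,b,d,c)$, cancels $D(a,b)^2$, and reverses three determinants to recognize $-S(a,c,b,d)$, which is equivalent to your twelve-factor accounting but with less opportunity for sign slippage.
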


\begin{proof}
Since $S$ is invariant under cyclic permutations of its variables $S(a, b, c, d), S(a, b, d, c)$ and $S(a, c, b, d)$ determine the values of $S$ under all permutations of the set $\{a, b, c, d\}$.  This implies the first part of the lemma.

For the second part, note that
\begin{align}
S(a, b, c, d)&S(a, b, d, c) \notag\\
	&=\chi\left(D(a, b)D(b, c) D(c, d) D(d, a)\right) \chi\left(D(a, b) D(b, d) D(d, c) D(c, a) \right) \notag\\
	&=\chi\left(D(b, c) D(c, d) D(d, a) D(b, d) D(d, c) D(c, a)\right) \notag\\
	&=-\chi\left(D(a, c) D(c, b) D(b, d) D(d, a)\right) \notag\\
	&=-S(a, c, b, d). \label{eq:threeS}
\end{align}
Since these are all either $+1$ or $-1$, then equation~\eqref{eq:threeS} shows that if one of $S(a, b, c, d), S(a, b, d, c)$ and $S(a, c, b, d)$ are $+1$, then exactly two are $+1$ and the other is $-1$.
\end{proof}

We next show that $\mathcal{H}_q$ is 3-transitive. Recall from the introduction that $\PGL(2,q)$ is the set of all invertible $2 \times 2$ matrices over $\mathbb{F}_q$ equivalent up to multiplication by scalar matrices. Observe that $\PGL(2,q)$ acts naturally on $\mathbb{P}^1\mathbb{F}_q$ as follows:

 $$A = \begin{pmatrix} a &b \\ c &d \end{pmatrix} \mbox{ sends } [u_1, u_2] \mbox{ to } [au_1+bu_2: cu_1+du_2].$$ It is well known (see for example \cite[p. 245]{DM}) that this action is 3-transitive. 

\begin{lemma}\label{lem:3tran}
For any prime power $q \equiv 3 \pmod{4}$, the hypergraph $\mathcal{H}_q$ is $3$-transitive.
\end{lemma}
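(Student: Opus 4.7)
The plan is to show that $\mathrm{PGL}(2,q)$ sits inside $\mathrm{Aut}(\mathcal{H}_q)$; combined with the fact (already recalled in the excerpt) that $\mathrm{PGL}(2,q)$ acts $3$-transitively on $\mathbb{P}^1\mathbb{F}_q$, this immediately gives $3$-transitivity of $\mathcal{H}_q$. So the entire content of the lemma reduces to verifying that the action of $\mathrm{PGL}(2,q)$ on $\mathbb{P}^1\mathbb{F}_q$ preserves the hyperedge relation, i.e.\ fixes the function $S$ on $4$-tuples.

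The key ingredient is the multiplicativity of the determinant: for any $A\in\mathrm{GL}(2,\mathbb{F}_q)$ and $u,v\in\mathbb{F}_q^2$, a one-line calculation gives $D(Au,Av)=\det(A)\cdot D(u,v)$. I would open the proof by recording this identity, then apply it to all four factors appearing in the definition of $S$:
\[
D(Aa,Ab)\,D(Ab,Ac)\,D(Ac,Ad)\,D(Aa,Ad)=\det(A)^{4}\,D(a,b)\,D(b,c)\,D(c,d)\,D(a,d).
\]
Since $\chi$ is multiplicative and $\chi(\det(A)^{4})=\chi(\det(A))^{4}=1$ (using $\det(A)\neq 0$), this gives $S(Aa,Ab,Ac,Ad)=S(a,b,c,d)$. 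I should also note that this is consistent with the passage of $S$ and of the action to the projective line: rescaling any argument of $D$ by $\lambda$ multiplies the above fourfold product by $\lambda^{2}$ in each coordinate, hence by a square, and rescaling $A$ by $\mu$ multiplies it by $\mu^{8}$, again a square.

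Having established invariance of $S$ on ordered $4$-tuples, I would then observe that Definition~\ref{def:paley-hgraph} characterises hyperedges purely through values of $S$ on permutations of $(a,b,c,d)$. Hence for any $A\in\mathrm{PGL}(2,q)$ and any distinct $a,b,c,d\in\mathbb{P}^1\mathbb{F}_q$, the set $\{Aa,Ab,Ac,Ad\}$ is a hyperedge iff $\{a,b,c,d\}$ is; equivalently $A$ is an automorphism of $\mathcal{H}_q$. Combining this inclusion $\mathrm{PGL}(2,q)\leq\mathrm{Aut}(\mathcal{H}_q)$ with the $3$-transitivity of $\mathrm{PGL}(2,q)$ on $\mathbb{P}^1\mathbb{F}_q$ completes the proof.

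There is no serious obstacle here; the only mild subtlety is bookkeeping the scalar ambiguities (both in passing from $\mathbb{F}_q^2\setminus\{0\}$ to $\mathbb{P}^1\mathbb{F}_q$ and in passing from $\mathrm{GL}(2,q)$ to $\mathrm{PGL}(2,q)$), but in every case the scalar enters $S$ to an even power and is killed by $\chi$. This argument also foreshadows part (c) of Theorem~\ref{thm:main-const}, since we have in fact exhibited $\mathrm{PGL}(2,q)$ as a subgroup of $\mathrm{Aut}(\mathcal{H}_q)$.
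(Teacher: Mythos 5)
Your proposal is correct and follows essentially the same route as the paper: establish $D(Au,Av)=\det(A)\,D(u,v)$, deduce $S$-invariance from $\chi(\det(A)^4)=1$, conclude $\mathrm{PGL}(2,q)\leq\mathrm{Aut}(\mathcal{H}_q)$, and invoke the $3$-transitivity of the $\mathrm{PGL}(2,q)$-action on $\mathbb{P}^1\mathbb{F}_q$. Your extra remarks on the scalar ambiguities are a welcome bit of care that the paper leaves implicit.
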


\begin{proof}
Let $u:= (u_1, u_2)$ and $v:= (v_1, v_2)$ be elements of $\mathbb{F}_q^2$ and let $A \in \PGL(2,q)$. Then,
\begin{align*}
D(uA^T, vA^T)
	& = D\left( (au_1+b u_2, cu_1 + du_2), (a v_1 + b v_2, c v_1+ d v_2)\right)\\
	&=(au_1 + bu_2)(cv_1+dv_2) - (av_1 + b v_2)(c u_1 + d u_2)\\
	&=(ad-bc)(u_1v_2 - v_1 u_2)\\
	&=\det(A) D(u, v).
\end{align*}

Thus, for any $x, y, u, v \in \mathbb{F}_q^2$,
\[
S(xA^T, yA^T, uA^T, vA^T) = \chi\left(\det(A)^4\right) S(x, y, u, v) = S(x, y, u, v).
\]
This shows that the map $x \mapsto xA^T$ is an automorphism of the hypergraph $\mathcal{H}_q$. In particular, $\PGL(2,q) \leq$ Aut$(\mathcal{H}_q)$ and $\mathcal{H}_q$ is 3-transitive.

\end{proof}

We can now prove our first main result:

\begin{theorem}\label{thm:paley0or2}
For any prime power $q \equiv 3\pmod{4}$ and five distinct vertices $a, b, c, d, e \in \mathbb{P}^1\mathbb{F}_q$, in $\mathcal{H}_q$, the set $\{a, b, c, d, e\}$ either contains $0$ or $2$ hyperedges.
\end{theorem}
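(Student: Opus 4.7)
The plan is to use the $3$-transitivity of $PGL(2, q) \le \operatorname{Aut}(\mathcal{H}_q)$ from Lemma \ref{lem:3tran} to reduce to a canonical configuration, then verify the conclusion by a finite case analysis. Given any five distinct vertices, I may assume, after applying a suitable element of $PGL(2, q)$, that three of them are the canonical points $\infty := [1:0]$, $0 := [0:1]$, $1 := [1:1]$, so that the remaining two take the form $[x:1]$ and $[y:1]$ with $x, y \in \mathbb{F}_q \setminus \{0, 1\}$ and $x \neq y$.

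Next I compute the ten determinants $D(u,v)$ for distinct $u, v \in \{\infty, 0, 1, x, y\}$ and introduce
\[
  \alpha := \chi(x),\quad \beta := \chi(y),\quad \gamma := \chi(1 - x),\quad \delta := \chi(1 - y),\quad \eta := \chi(x - y),
\]
each in $\{-1, +1\}$. Since $\chi$ is multiplicative and $\chi(-1) = -1$, every $\chi(D(u, v))$ appearing in an $S$-value is a signed monomial in $\{\alpha, \beta, \gamma, \delta, \eta\}$. For each of the five $4$-subsets I expand the three distinct $S$-values supplied by Lemma \ref{lem:relabel} and simplify. A short computation shows: $\{0, 1, x, y\}$ is a hyperedge if{f} $\eta = \beta\gamma$ and $\alpha\beta\gamma\delta = -1$; $\{\infty, 1, x, y\}$ if{f} $\eta = \gamma$ and $\gamma\delta = -1$; $\{\infty, 0, x, y\}$ if{f} $\eta = \beta$ and $\alpha\beta = -1$; $\{\infty, 0, 1, y\}$ if{f} $\beta = \delta = -1$; and $\{\infty, 0, 1, x\}$ if{f} $\alpha = \gamma = -1$.

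The theorem is thereby reduced to the purely combinatorial claim that, for every $(\alpha, \beta, \gamma, \delta, \eta) \in \{\pm 1\}^5$, the number of these five conditions that are simultaneously satisfied lies in $\{0, 2\}$. I verify this by enumeration, conveniently organised first by the pairs $(\alpha, \gamma), (\beta, \delta) \in \{\pm 1\}^2$ (giving sixteen patterns) and then by the sign of $\eta \in \{\pm 1\}$. The main obstacle is book-keeping, since the five conditions share no single symmetry: two of them (the subsets omitting $x$ or $y$) depend on only one of the variable vertices and not on $\eta$, while the other three couple $x$ and $y$ through $\eta = \chi(x-y)$. However, in each of the sixteen type patterns the hyperedges arise only in complementary pairs, never singly, so the total count is always $0$ or $2$.
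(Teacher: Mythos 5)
Your proposal is correct, and I verified the details: the five edge-criteria you state do follow from the determinant computations (for instance, $\{[1:0],[0:1],[1:1],[x:1]\}$ is an edge precisely when $\chi(x)=\chi(1-x)=-1$, consistent with the computation in Theorem \ref{thm:paley-edge-count}), and the resulting truth table over $(\alpha,\beta,\gamma,\delta,\eta)\in\{\pm1\}^5$ does give a count of $0$ or $2$ in all thirty-two cases. Your route shares the paper's main engine (normalize by $3$-transitivity via Lemma \ref{lem:3tran}, then reduce each $S$-value to a product of quadratic characters of differences), but the case analysis is organized quite differently. The paper first invokes the global parity identity \eqref{eq:doublecover} --- the product of five $S$-values, one for each $4$-subset of $\{a,b,c,d,e\}$, equals $+1$ --- which rules out all five subsets being edges; it then assumes one edge and one non-edge, normalizes the three common points of the non-edge, and deduces the sign constraints in a short targeted chain so that only the single sign $\chi(e_1-a_1)$ remains to split into two symmetric outcomes. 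You instead normalize an arbitrary triple of the five points and verify the conclusion unconditionally by exhausting all sign patterns, which dispenses with both the parity identity and the initial ``suppose some $4$-set is an edge'' reduction, at the cost of a $32$-case check that your write-up asserts rather than displays; a complete version should exhibit that table (or at least the sixteen $(\alpha,\gamma),(\beta,\delta)$ patterns with the two values of $\eta$). One further caution: your criteria are correct under the cyclic convention $S(a,b,c,d)=\chi\bigl(D(a,b)D(b,c)D(c,d)D(d,a)\bigr)$ used in the proof of Lemma \ref{lem:relabel}; the displayed definition of $S$ (with $D(a,d)$ as the last factor) differs from this by a sign, so you should state explicitly which convention you are computing with.
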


\begin{proof}
If there are no edges in $\{a, b, c, d, e\}$, then we are done, so suppose without loss of generality that $\{a, b, c, d\} \in E(\mathcal{H}_q)$.  Note that as long as $a, b, c, d, e$ are all distinct, using the definition of the function $S$ (Definition \ref{def:S}) and the multiplicativity of $\chi$ we obtain
\begin{equation}\label{eq:doublecover}
S(a, b, c, d) S(b, c, e, d) S(a, d, b, e) S(a, e, d, c) S(a, b, e, c) = +1.
\end{equation}
This implies that not all $5$ different $4$-sets in $\{a, b, c, d, e\}$ are edges or else the product in equation~\eqref{eq:doublecover} would be $-1$.  Therefore, there is at least one non-edge,  say $\{b, c, d, e\}$.  By Lemma \ref{lem:relabel}, after possibly relabelling points, we can assume that $S(b, c, d, e) = -1$ and $S(b, c, e, d) = S(b, d, c, e) = +1$. Furthermore, by Lemma \ref{lem:3tran}, we may assume that $b = [0:1]$, $c = [1:1]$ and $d = [1:0]$.  Thus, it can be assumed that $a = [a_1:1]$ and $e = [e_1:1]$ for some $a_1,e_1 \in \mathbb{F}_q \backslash \{0,1\}$.  Note that for any $x \in \mathbb{F}_q$, $D((x, 1), (1, 0)) = -1$ and $D((1, 0), (x, 1)) = 1$.

Then, $1-e_1$ is a square in $\mathbb{F}_q$ since
\[
+1 = S([0:1], [1:1], [e_1:1], [1:0]) =\chi\left((-1)\cdot(1-e_1) \cdot (-1)\cdot (1) \right) = \chi\left(1-e_1 \right).
\]
Also, $e_1$ is a non-square since $S([0:1], [1:1], [1:0], [e_1:1]) = -1.$  The fact that 
\[
S([0:1], [1:1], [1:0], [a_1:1]) = S([0:1], [1:1], [a_1:1], [1:0]) = -1
\]
 implies that both $a_1$ and $1-a_1$ are non-squares.

Thus, since 
\[
S([a_1:1], [1:0], [e_1:1], [0:1]) = \chi\left((-1)(1)(e_1)(-a_1)\right) = +1
\]
we must have that $\{a, b, d, e\}$ is not an edge.

Finally, note that
\begin{align*}
S([a_1:1], [0:1], [e_1:1], [1:1]) &= \chi\left(a_1 (-e_1)(e_1-1)(1-a_1)\right) = -1,\\
S([a_1:1], [0:1], [1:1],  [e_1:1]) & = \chi\left(a_1(-1)(1-e_1)(e_1-a_1) \right) = \chi\left(e_1-a_1\right),\\
S([a_1:1], [1:0], [e_1:1], [1:1]) &= \chi\left((-1)(+1)(e_1-1)(1-a_1) \right) = -1, \text{ and}\\
S([a_1:1], [1:0], [1:1],  [e_1:1]) &= \chi\left((-1)(+1)(1-e_1)(e_1 - a_1) \right) = -\chi\left(e_1-a_1\right).
\end{align*}
Hence if $e_1-a_1$ is a square in $\mathbb{F}_p$, then $\{a, b, c, e\}$ is a non-edge and $\{a, c, d, e\}$ is an edge.  Conversely, if $e_1 - a_1$ is a non-square, then $\{a, b, c, e\}$ is an edge and $\{a, c, d, e\}$ is a non-edge.

In either case, if the set $\{a, b, c, d, e\}$ contains at least one hyperedge, then it contains exactly two and the proof is complete.
\end{proof}

Next, we count the number of edges in $\mathcal{H}_q$. For this, two facts about sums of the function $\chi$ are used.  The only condition required for each is that $q$ is an odd prime power.  The first identity is
\begin{equation}\label{eq:sumLeg}
\sum_{x \in \mathbb{F}_q} \chi(x) = 0
\end{equation}
and the second is that for any $y \neq 0$,
\begin{equation}\label{eq:sumLegpairs}
\sum_{x \in \mathbb{F}_q} \chi(x)\chi(x+y) = -1.
\end{equation}
Both of these facts are standard exercises in number theory. We remark that (\ref{eq:sumLegpairs}) is the convolution $\chi * \chi$ in the sense of Fourier analysis on finite groups.

\begin{theorem}\label{thm:paley-edge-count}
For any prime power $q \equiv 3 \pmod{4}$, the hypergraph $\mathcal{H}_q$ has $e(\mathcal{H}_q) = \frac{(q+1)}{16} \binom{q+1}{3}$ and every set of $3$ vertices occurs in exactly $(q+1)/4$ hyperedges. 
\end{theorem}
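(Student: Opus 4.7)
The plan is to combine the $3$-transitivity established in Lemma~\ref{lem:3tran} with a single character-sum count. By Lemma~\ref{lem:3tran}, $PGL(2,q)$ acts $3$-transitively on $\mathbb{P}^1\mathbb{F}_q$ by hypergraph automorphisms, so every $3$-subset of vertices lies in the same number $\lambda$ of hyperedges; in particular $\mathcal{H}_q$ is a $3$-design. Double-counting incident pairs (triple, edge) then gives $4\,e(\mathcal{H}_q) = \lambda \binom{q+1}{3}$, and so it suffices to prove that $\lambda = (q+1)/4$.

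To compute $\lambda$, I would fix the canonical triple $b=[0:1]$, $c=[1:1]$, $d=[1:0]$ and count fourth vertices $a \in \mathbb{P}^1\mathbb{F}_q \setminus \{b,c,d\}$ for which $\{a,b,c,d\}$ is a hyperedge. Writing $a=[a_1:1]$ with $a_1 \in \mathbb{F}_q \setminus \{0,1\}$ and computing the six relevant determinants (several of which already appear in the proof of Theorem~\ref{thm:paley0or2}), Lemma~\ref{lem:relabel} reduces the edge condition --- after using $\chi(-1)=-1$ --- to the single requirement $\chi(a_1)=\chi(1-a_1)=-1$. Once both $a_1$ and $1-a_1$ are non-squares, the intermediate condition $S(a,b,d,c)=-1$, which expands to $\chi(a_1)\chi(1-a_1)=1$, is automatic, so only these two independent constraints remain.

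The remaining task is to count $N = |\{a_1 \in \mathbb{F}_q : \chi(a_1)=\chi(1-a_1)=-1\}|$ using the indicator
\[
\frac{(1-\chi(a_1))(1-\chi(1-a_1))}{4},
\]
which equals $1$ precisely on the desired set (the boundary values $a_1=0,1$ contribute $0$ and need no separate treatment). Expanding the product and applying \eqref{eq:sumLeg} makes the two linear sums vanish, leaving
\[
N = \frac{1}{4}\Big(q + \sum_{a_1 \in \mathbb{F}_q} \chi(a_1)\chi(1-a_1)\Big).
\]
The remaining sum rewrites via $\chi(1-a_1)=\chi(-1)\chi(a_1-1)=-\chi(a_1-1)$ as $-\sum_{a_1}\chi(a_1)\chi(a_1-1)$, which equals $+1$ by \eqref{eq:sumLegpairs} with $y=-1$. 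Hence $N = (q+1)/4$, so $\lambda = (q+1)/4$, and the edge-count follows from the double-count above as $e(\mathcal{H}_q) = \tfrac{q+1}{16}\binom{q+1}{3}$.

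The only place where care is required is in tracking the signs produced by $\chi(-1)=-1$: this sign appears both in the derivation of the edge condition from Lemma~\ref{lem:relabel} and in the final rewriting of $\chi(1-a_1)$ to put the last sum into the form handled by \eqref{eq:sumLegpairs}. Beyond that bookkeeping, everything is routine, with the $3$-transitivity doing the real work by replacing a global edge-count with a single fixed-triple character calculation.
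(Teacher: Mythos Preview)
Your proposal is correct and follows essentially the same route as the paper: fix the canonical triple $\{[0:1],[1:1],[1:0]\}$, reduce the hyperedge condition via Lemma~\ref{lem:relabel} to $\chi(a_1)=\chi(1-a_1)=-1$, expand the indicator $\tfrac14(1-\chi(a_1))(1-\chi(1-a_1))$, and evaluate using \eqref{eq:sumLeg} and \eqref{eq:sumLegpairs}, then invoke $3$-transitivity. The only cosmetic difference is that you sum over all of $\mathbb{F}_q$ (correctly noting that $a_1\in\{0,1\}$ contribute zero), whereas the paper sums over $\mathbb{F}_q\setminus\{0,1\}$ and tracks the resulting $-1$ corrections to the linear sums; both computations yield $(q+1)/4$.
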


\begin{proof}
Consider first hyperedges of the form $\{[0:1], [1:1], [1:0], [a:1]\}$ with $a \neq 0,1$.  If such a set is a hyperedge then by Lemma \ref{lem:relabel},
\begin{align*}
-1 = S([a:1], [0:1], [1:0], [1:1]) &= \chi\left(a\right)(-1)(+1)\chi\left(1-a\right) =\chi \left(a\right)\chi\left(a-1\right); \\
-1 = S([a:1], [0:1], [1:1], [1:0]) &= \chi\left(a\right)(-1)(-1)(+1) =\chi \left(a\right); \mbox{ and}\\
-1 = S([a:1],  [1:1], [0:1], [1:0]) &= \chi\left(a-1 \right) (+1)(-1)(+1)= -\chi\left(a-1 \right).
\end{align*}
Note that any two of these equations implies the third. Thus, $[a:1]$ is in a hyperedge with $\{[0:1], [1:1], [1:0]\}$ if{f} both $a$ and $1-a$ are non-squares in $\mathbb{F}_q$.  Consider
\[
\frac{1}{4}\left(1 - \chi\left(a \right) \right)\left(1 - \chi\left(1-a \right) \right) = 
	\begin{cases}
		1	&\text{if $a$ and $1-a$ are both non-square, and}\\
		0	&\text{otherwise}.
	\end{cases}
\]

The number of hyperedges in $\mathcal{H}_q$ of containing $\{[0:1], [1:1], [1:0]\}$ is then exactly
\begin{multline}\label{eq:edge-count}
\frac{1}{4} \sum_{a \in \mathbb{F}_q \setminus \{0,1\}} \left(1 - \chi\left(a\right)\right)\left(1 - \chi\left(1-a\right)\right) \\
	= \frac{1}{4}\sum_{a \in \mathbb{F}_q \setminus \{0,1\}}\left(1 - \chi\left(a \right) - \chi\left(1-a\right) + \chi\left(a \right)\chi\left(1-a\right)\right) .
\end{multline}

Consider the terms in equation \eqref{eq:edge-count} separately.  Note that, by equation \eqref{eq:sumLeg},
\[
\sum_{a \in \mathbb{F}_q \setminus \{0,1\}} \chi\left(a \right) = -\chi\left(1\right) = -1, \qquad \text{and} \qquad 
	\sum_{a \in \mathbb{F}_q \setminus \{0,1\}}\chi\left(1-a\right) = -\chi\left(1\right) = -1.
\]
Then, by equation \eqref{eq:sumLegpairs},
\[
\sum_{a \in \mathbb{F}_q \setminus \{0,1\}}\chi\left(a\right)\chi\left(1-a\right)
	=-\sum_{a \in \mathbb{F}_q \setminus \{0,1\}}\chi\left(a\right)\chi\left(a-1\right)
	=-(-1 - 0 - 0) = 1.
\]

Thus, substituting into equation \eqref{eq:edge-count} gives,
\[
\frac{1}{4} \sum_{a \in \mathbb{F}_q \setminus \{0,1\}} \left(1 - \chi\left(a\right)\right)\left(1 - \chi\left(1-a\right)\right) 
	 = \frac{1}{4}\left((q-2) - (-1) - (-1) +1 \right) = \frac{q+1}{4}.
\]

Thus, the three vertices $\{[0:1], [1:1], [1:0]\}$ are contained together in exactly $(q+1)/4$ hyperedges of $\mathcal{H}_p$ and since the hypergraph is $3$-transitive, the same is true of any other $3$ vertices.  That the total number of hyperedges is
\[
e(\mathcal{H}_q) = |E(\mathcal{H}_q)| = \frac{(q+1)}{16}\binom{q+1}{3}
\]
follows immediately.
\end{proof}

\section{A $4$-hypergraph associated to $\M_{11}$}\label{sec:M11}

In this section, we describe a single $4$-uniform hypergraph on $12$ vertices with the property that any $5$ vertices span either $0$ or $2$ hyperedges and that has the same number of hyperedges as $\mathcal{H}_{11}$ (another hypergraph on $12$ vertices).  

Hughes \cite{dH65} examined certain designs arising from groups and showed that there is a $3-(12, 4, 3)$ design which occurs as an orbit on 4-subsets under the natural action of the Mathieu group $\M_{11}$ on 12 points. We denote this design by $\mathcal{M}$ here.  A listing of the hyperedges of $\mathcal{M}$ is given in an appendix.  One can verify directly that the hypergraph $\mathcal{M}$, with $165$ hyperedges, has the property that every set of $3$ vertices is contained in exactly $3$ hyperedges and that every set of $5$ vertices contains either exactly $0$ or $2$ hyperedges.  

This $3$-design was also examined by Devillers, Giudici, Li, and Praeger \cite{DGHP} and their results can be used to give alternative proofs of these facts.  In \cite{DGHP}, a graph $\Gamma$ is defined with vertex set being the hyperedges of $\mathcal{M}$ and two vertices $A, B$ being adjacent if{f} $|A \cap B| = 3$.

\begin{theorem}[in Theorem 2.5 of \cite{DGHP}]\label{thm:gamma-graph}
The graph $\Gamma$ is an $8$-regular graph on $165$ vertices with the property that any two cliques of size $3$ intersect in at most one vertex.
\end{theorem}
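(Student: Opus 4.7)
The plan is to derive both assertions directly from the parameters of $\mathcal{M}$ together with the $0$-or-$2$ property in Theorem \ref{thm:M11}(a), without using the group $M_{11}$ at all. The vertex count is immediate: $|V(\Gamma)| = |E(\mathcal{M})| = 165$ by Theorem \ref{thm:M11}(b). To compute the degree of a fixed hyperedge $A$, I would count hyperedges $B \neq A$ with $|A \cap B| = 3$ by summing over the four $3$-subsets $S$ of $A$. Since $\mathcal{M}$ is a $3$-$(12,4,3)$ design, each such $S$ lies in exactly three hyperedges, one of which is $A$, so $S$ extends to exactly two other hyperedges containing it. Different $S$'s produce different $B$'s (the $3$-subset $A \cap B$ is uniquely determined by $B$ once $|A \cap B| = 3$), giving a total degree of $4 \cdot 2 = 8$.

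For the triangle property I would prove the stronger statement that every edge of $\Gamma$ lies in \emph{exactly} one triangle; this at once implies that two distinct triangles share at most one vertex. Fix adjacent $A, B$ and write $T := A \cap B$, $A = T \cup \{a\}$, $B = T \cup \{b\}$ with $a \neq b$; let $C \notin \{A,B\}$ be any common neighbour of $A$ and $B$ in $\Gamma$.

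Split into two cases. If $T \subseteq C$, then $C = T \cup \{z\}$ for some $z \neq a,b$, and by the design property there is exactly one such hyperedge, namely the unique third block through $T$; one checks directly that it is adjacent to both $A$ and $B$. If $T \not\subseteq C$, then the condition $|A \cap C| = 3$ together with the missing element of $T$ forces $A \cap C = (T \setminus \{t\}) \cup \{a\}$ for some $t \in T$, and symmetrically $B \cap C = (T \setminus \{t'\}) \cup \{b\}$ for some $t' \in T$. Since $|C|=4$ and both $a,b \in C$, this forces $t = t'$ and $C = (T \setminus \{t\}) \cup \{a,b\} \subseteq A \cup B$. But then the $5$-set $A \cup B$ would contain the three hyperedges $A$, $B$ and $C$, contradicting the $0$-or-$2$ property of Theorem \ref{thm:M11}(a).

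The main obstacle is the intersection bookkeeping in the second case, in particular observing that the two missing elements $t$ and $t'$ must coincide and that this forces $C$ to sit inside the $5$-set $A \cup B$, which is precisely where the $0$-or-$2$ condition can be invoked to obtain a contradiction. Once that case is ruled out, the first case leaves a single choice of $C$, so every edge of $\Gamma$ is in exactly one triangle and the claim on clique intersections follows at once.
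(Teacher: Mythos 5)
Your argument is mathematically correct: the vertex count and the $8$-regularity follow exactly as you say from the $3$-$(12,4,3)$ parameters, and your case analysis for the triangle property is sound --- in the case $T\not\subseteq C$ the forced equality $C=(T\setminus\{t\})\cup\{a,b\}$ does place three blocks inside the $5$-set $A\cup B$, and the case $T\subseteq C$ leaves only the unique third block through $T$, so every edge of $\Gamma$ lies in exactly one triangle and distinct $3$-cliques meet in at most one vertex. The route, however, is necessarily different from the paper's, because the paper gives no proof at all: Theorem \ref{thm:gamma-graph} is quoted verbatim from Theorem 2.5 of \cite{DGHP}, where it is established via the Mathieu group $M_{11}$ and the Witt design. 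Your derivation is far more elementary (no group theory), but it comes with one important caveat about its role in this paper: you assume Theorem \ref{thm:M11}(a), while the paper's sole use of Theorem \ref{thm:gamma-graph} is to \emph{derive} Theorem \ref{thm:M11}(a) via Proposition \ref{prop:m11-0or2}, as an alternative to direct verification. So your proof cannot be substituted into the paper without collapsing that alternative proof into a circle; it is only non-vacuous because the $0$-or-$2$ property is independently checkable by computer. What your approach buys instead is a clean converse-type fact worth recording: for \emph{any} $3$-$(n,4,\lambda)$ design with the $0$-or-$2$ property, the associated intersection graph is $4(\lambda-1)$-regular with each edge in exactly one triangle, so the structure in Figure \ref{fig:Gamma-graph} is forced by the hypergraph properties alone and is not special to $M_{11}$.
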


Theorem \ref{thm:gamma-graph} is used to give another proof of the fact that any set of $5$ vertices of $\mathcal{M}$ contains either $0$ or exactly $2$ hyperedges.

\begin{proposition}\label{prop:m11-0or2}
Let $a, b, c, d, e$ be vertices of $\mathcal{M}$ with $\{a, b, c, d\} \in \mathcal{M}$.  There is exactly one other hyperedge of $\mathcal{M}$ in the set $\{a, b, c, d, e\}$.
\end{proposition}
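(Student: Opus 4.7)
The plan is to analyze the neighborhood of the hyperedge $H = \{a, b, c, d\}$ in the graph $\Gamma$ from Theorem \ref{thm:gamma-graph}, using the $3$-design property together with the restriction that any two triangles in $\Gamma$ share at most one vertex. Observe first that the potential additional hyperedges inside $\{a, b, c, d, e\}$ are exactly the four sets of the form $H \setminus \{x\} \cup \{e\}$ with $x \in H$, and that each such set, if it is a hyperedge, is a neighbor of $H$ in $\Gamma$. The task reduces to showing that, for each fixed $e \notin H$, exactly one of these four candidates is a hyperedge.

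First I would show that the $8$ neighbors of $H$ in $\Gamma$ split canonically into four triangles through $H$. Since $\mathcal{M}$ is a $3\text{-}(12,4,3)$ design, for each $x \in H$ the $3$-subset $H \setminus \{x\}$ lies in exactly two hyperedges $H'_1, H'_2$ other than $H$; these two together with $H$ pairwise intersect in $H \setminus \{x\}$ and so form a triangle in $\Gamma$. This accounts for $4 \cdot 2 = 8$ neighbor-slots. By Theorem \ref{thm:gamma-graph}, any two of these four triangles, both containing $H$, can share no other vertex, so the slots are filled by $8$ distinct neighbors, and hence by all neighbors of $H$.

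Next I would use this structural picture to rule out two candidates sharing the same added vertex $e$. Suppose toward contradiction that both $H \setminus \{x\} \cup \{e\}$ and $H \setminus \{y\} \cup \{e\}$ are hyperedges for some $x \neq y$ in $H$. These two neighbors of $H$ intersect in a $3$-set containing $e$, so together with $H$ they form a fifth triangle in $\Gamma$. But both of its non-$H$ vertices are already neighbors of $H$ and so each belongs to exactly one of the four triangles identified above; consequently the fifth triangle shares two vertices with one of those four, violating the hypothesis of Theorem \ref{thm:gamma-graph}. Hence each $e \notin H$ appears in at most one neighbor of $H$.

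Finally, a quick counting step closes the argument: the $8$ neighbors of $H$ each contain exactly one vertex from $V(\mathcal{M}) \setminus H$, contributing $8$ in total across the $8$ vertices outside $H$. Combined with the "at most one" bound, each $e \notin H$ must lie in exactly one neighbor of $H$, which is precisely the statement that $\{a, b, c, d, e\}$ contains exactly one other hyperedge besides $H$. The main obstacle is the triangle-collision step in the third paragraph, but it is a direct consequence of Theorem \ref{thm:gamma-graph} once the four natural triangles through $H$ have been exhibited.
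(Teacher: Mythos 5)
Your proposal is correct and follows essentially the same route as the paper: both decompose the eight $\Gamma$-neighbours of $\{a,b,c,d\}$ into four triangles through that vertex via the $3$-design property, derive a contradiction with Theorem \ref{thm:gamma-graph} from two candidates sharing the same added vertex $e$ (a fifth triangle meeting one of the four in two vertices), and finish by counting the eight distinct outside vertices. The only cosmetic difference is that you phrase the conclusion as a pigeonhole step, whereas the paper states directly that the $x_i$ are pairwise distinct.
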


\begin{proof}
Note that since $\mathcal{M}$ is a $3-(12, 4, 3)$ design, then for each subset of $X \subseteq \{a, b, c, d\}$, of size $3$, there are two vertices $x, y \in V(\mathcal{M})$ so that $X \cup \{x\}, X \cup \{y\} \in \mathcal{M}$.  Then the three sets $\{a, b, c, d\}$, $X \cup \{x\}$, and $X \cup \{y\}$ correspond to a clique of size $3$ in $\Gamma$.  Since these four cliques intersect in the vertex $\{a, b, c, d\}$, they are otherwise pairwise disjoint.  Let $x_1, x_2, x_3, x_4, x_5, x_6, x_7, x_8$ be such that $\mathcal{M}$ contains the hyperedges
\[
\begin{tabular}{llll}
	$\{a, b, c, x_1\}$, 	&$\{a, b, d, x_3\}$, 	&$\{a, c, d, x_5\}$,	&$\{b, c, d, x_7\}$\\
	$\{a, b, c, x_2\}$, 	&$\{a, b, d, x_4\}$, 	&$\{a, c, d, x_6\}$,	&$\{b, c, d, x_8\}$.
\end{tabular}
\]
Since these sets are all distinct, $x_1 \neq x_2$, $x_3 \neq x_4$, $x_5 \neq x_6$, and $x_7 \neq x_8$.  Furthermore, since $\Gamma$ is $8$-regular, these are the only $8$ neighbours of the vertex $\{a, b, c, d\}$, as in Figure \ref{fig:Gamma-graph}.

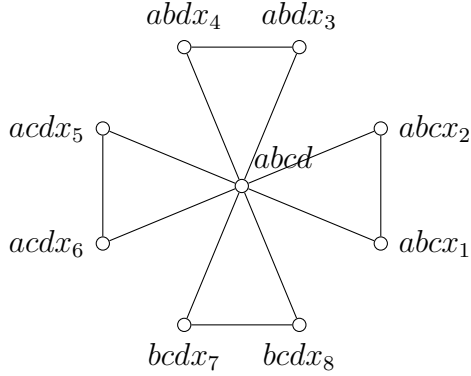
\begin{figure}[htb]
\begin{center}
\begin{tikzpicture}
	\tikzstyle{vertex}=[circle, draw=black,  minimum size=5pt,inner sep=0pt]
	
	\node[vertex, label={80:$abcd$}] at (0, 0) (0) {};
	\node[vertex, label=right:{$abcx_1$}] at (337.5:2) (1) {};
	\node[vertex, label=right:{$abcx_2$}] at (22.5:2) (2) {};
	\node[vertex, label=above:{$abdx_3$}] at (67.5:2) (3) {};
	\node[vertex, label=above:{$abdx_4$}] at (112.5:2) (4) {};
	\node[vertex, label=left:{$acdx_5$}] at (157.5:2) (5) {};
	\node[vertex, label=left:{$acdx_6$}] at (202.5:2) (6) {};
	\node[vertex, label=below:{$bcdx_7$}] at (247.5:2) (7) {};
	\node[vertex, label=below:{$bcdx_8$}] at (292.5:2) (8) {};
	\draw (0) -- (1) -- (2) --(0)-- (3) -- (4) -- (0) -- (5) -- (6) -- (0) -- (7)--(8)--(0); 
\end{tikzpicture}
\end{center}
\caption{Neighbourhood of a vertex in $\Gamma$}
\label{fig:Gamma-graph}
\end{figure}

Suppose, for some $i \neq j$, that $x_i = x_j$ (with $j \neq i+1$).  Let $A, B \subseteq \{a, b, c, d\}$ be the two different sets of size $3$ with $A \cup \{x_i\}, B \cup \{x_i\} \in \mathcal{M}$.  Then, since $|A \cap B| = 2$, there is an edge in $\Gamma$ between $A \cup \{x_i\}$ and $B \cup \{x_i\}$.  As shown previously, there is $x_{i'} \neq x_i$ with $A \cup \{x_{i'}\} \in \mathcal{M}$.  Then, in $\Gamma$, the three vertices $\{a, b, c, d\}$, $A \cup \{x_i\}$ and $B \cup \{x_i\}$ form a clique on $3$ vertices that shares two vertices with the clique formed by $\{a,b, c, d\}$, $A \cup \{x_i\}$ and $A \cup \{x_{i'}\}$.  This contradicts Theorem \ref{thm:gamma-graph} and so each of $x_1, x_2, \ldots, x_8$ are distinct and different from $a, b, c$ or $d$.  In particular, there exactly one $i \in [1, 8]$ with $x_i = e$.  Thus, there is exactly one hyperedge containing $e$ and three vertices from $\{a, b, c, d\}$.  

This completes the proof that every set of $5$ vertices in $\mathcal{M}$ either contains no hyperedges or contains exactly $2$. 
\end{proof}

Furthermore, using a connection to Witt design $\mathcal{W}_{11}$, it is shown in \cite{DGHP} that the full automorphism group of $\mathcal{M}$ is the group $\M_{11}$.

As described in the introduction, a direct examination shows that $\mathcal{M}$ is not isomorphic to $\mathcal{H}_{11}$, although they both have $165$ hyperedges.  In Section \ref{sec:tourn} to come, another property of hypergraphs is examined to prove that the two hypergraphs $\mathcal{M}$ and $\mathcal{H}_{11}$ are different.

\section{Extremal bounds}\label{sec:ub}

We now consider the maximum number of hyperedges possible in a $4$-uniform hypergraph with the property that every $5$ vertices span either $0$ or $2$ hyperedges, and deduce that the hypergraph $\mathcal{H}_q$ considered in Section 1 is maximal among 4-hypergraphs on $q+1$ vertices with this property. The bound given in Proposition \ref{prop:ub0or2} follows directly from an argument used by de Caen \cite{dC83} to give upper bounds for the Tur\'{a}n numbers for complete hypergraphs.  The full proof is included here both for completeness and to highlight the fact that those hypergraphs which attain the upper bound are necessarily designs.  Though we shall only use this result in the case when $r=4$, we state and prove the result for arbitrary $r$-uniform hypergraphs. 

\begin{proposition}\label{prop:ub0or2}
Let $r \geq 2$ and let $\mathcal{H}$ be an $r$-uniform hypergraph on $n$ vertices with the property that every set of $r+1$ vertices contains at most $2$ hyperedges.  Then,
\[
|E(\mathcal{H})| \leq \frac{n}{r^2} \binom{n}{r-1},
\]
with equality if{f} $\mathcal{H}$ is such that every set of $(r-1)$ vertices occurs in exactly $n/r$ hyperedges.
\end{proposition}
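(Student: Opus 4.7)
The plan is to adapt de Caen's double-counting argument \cite{dC83} to the hypothesis that every $(r+1)$-set spans at most two hyperedges. For each $(r-1)$-subset $S \subseteq V(\mathcal{H})$, let $d(S)$ denote its degree, i.e.\ the number of hyperedges containing $S$. Counting incidences of $(r-1)$-subsets in edges gives $\sum_{S} d(S) = r|E(\mathcal{H})|$, and Cauchy--Schwarz applied over the $\binom{n}{r-1}$ subsets yields
\[
\sum_{S} d(S)^2 \;\geq\; \frac{r^2 |E(\mathcal{H})|^2}{\binom{n}{r-1}},
\]
with equality precisely when $d$ is constant on $(r-1)$-subsets.

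The key step, where the hypothesis enters, is to bound $\sum_{S} d(S)(d(S)-1)$ from above. This sum counts ordered pairs of distinct edges $(e, f)$ with $|e \cap f| = r-1$, since each such pair contributes exactly once at $S = e \cap f$. I would reparametrise this count by the pairs $(e, v)$ with $v \notin e$: given any second edge $f \subseteq e \cup \{v\}$, we must have $f = (e \setminus \{u\}) \cup \{v\}$ for some $u \in e$, so $|e \cap f| = r-1$. The hypothesis that the $(r+1)$-set $e \cup \{v\}$ spans at most two edges then forces at most one such $f$. There are $(n-r)|E(\mathcal{H})|$ pairs $(e,v)$, giving
\[
\sum_{S} d(S)(d(S)-1) \;\leq\; (n-r)|E(\mathcal{H})|.
\]
Subtracting $\sum_{S} d(S) = r|E(\mathcal{H})|$ from the Cauchy--Schwarz bound and chaining the inequalities produces
\[
\frac{r^2 |E(\mathcal{H})|^2}{\binom{n}{r-1}} \;\leq\; n\,|E(\mathcal{H})|,
\]
which rearranges to the claimed bound.

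For the equality clause, both inequalities in the chain must be tight. Cauchy--Schwarz equality forces $d(S)$ to be constant, and the identity $\sum_{S} d(S) = r|E(\mathcal{H})|$ then pins down the common value to $n/r$ at the extremal edge count, so $\mathcal{H}$ is a design in which every $(r-1)$-subset lies in exactly $n/r$ hyperedges; the converse is a short check using the same two identities. The only conceptual step is the reparametrisation above --- recognising that the `at most two edges per $(r+1)$-set' condition is exactly what is needed to bound, by a single unit per $(e,v)$, the number of edge pairs meeting in an $(r-1)$-face. Everything else is routine arithmetic, and I do not expect any real obstacle.
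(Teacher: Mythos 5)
Your proof is correct and is essentially the paper's argument: the paper double-counts pairs $(A,B)$ with $A$ an edge, $B$ a non-edge and $|A\cap B|=r-1$, whereas you count the complementary quantity $\sum_S d(S)(d(S)-1)$ of pairs of edges meeting in $r-1$ vertices; since the two counts sum to $r(n-r)|E(\mathcal{H})|$, your bound from the ``at most one further edge in $E\cup\{x\}$'' observation and your Cauchy--Schwarz step are exactly the paper's lower bound and Jensen step in disguise. The equality analysis via tightness of Cauchy--Schwarz matches the paper's as well, so there is nothing to fix.
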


\begin{proof}
 The proof involves double-counting the set
 \begin{equation}\label{eq:edge-nonedge}
  \left\{(A, B) \mid |A| = |B|=r,\ |A \cap B| = r-1,\ A \in \mathcal{H} \text{ and } B \notin \mathcal{H} \right\}.
 \end{equation}
 
 The size of the set in \eqref{eq:edge-nonedge} can be bounded from below as follows.  Let $E_1, E_2, \ldots, E_m$ be the hyperedges of $\mathcal{H}$.  Fix $i \leq m$ and $x \notin E_i$. Since the set $E_i \cup \{x\}$ contains $r+1$ vertices and at least one hyperedge, then by assumption, it contains at most $2$.  That is, there is at most one vertex $y \in E_i$  so that $E_i \cup \{x\} \setminus \{y\} \in \mathcal{H}$.  That is, for each $z \in E_i \setminus \{y\}$, the pair $(E_i, E_i \cup \{x\} \setminus \{z\})$ is in the set in \eqref{eq:edge-nonedge}.  Furthermore, all pairs in this set are of this form.  Thus,
 \begin{align}
|\big\{(A, B) \mid &|A| = |B|=r,\ |A \cap B| = r-1,\ A \in \mathcal{H} \text{ and } B \notin \mathcal{H} \big\} \notag\\
	&=\sum_{i = 1}^{m} \sum_{x \notin E_i} |\{z \in E_i \mid E_i \cup\{x\} \setminus \{z\} \notin \mathcal{H}\}|\notag\\
	&\geq \sum_{i=1}^m \sum_{x \notin E_i} (r-1) \notag\\
	&=m(n-r)(r-1) = e(\mathcal{H}) (n-r)(r-1). \label{eq:exact-count}
\end{align}

Note that the inequality in \eqref{eq:exact-count} is, in fact, an identity in the case that every $r+1$ vertices span either $0$ or exactly $2$ hyperedges.
 
 For an upper bound on the size of the set in equation \eqref{eq:edge-nonedge}, order the $(r-1)$-sets of vertices $\{C_i \mid 1\leq i \leq \binom{n}{r-1}\}$ and for each $i \leq \binom{n}{r-1}$, let $a_i$ be the number of hyperedges of $\mathcal{H}$ containing the set $C_i$.  Note that, by double counting, $\sum a_i = r e(\mathcal{H})$.  Further, the number of pairs $(A, B)$ with $A \in \mathcal{H}$, $B \notin \mathcal{H}$ and $A \cap B = C_i$ is $a_i(n-r+1-a_i)$.  Thus,
 \begin{align}
 \sum_{i = 1}^{\binom{n}{r-1}} &\left| \{(A, B) \mid A \in \mathcal{H},\ |B| = r, B \notin \mathcal{H}, A \cap B = C_i \}\right| \notag\\
 	&=\sum_{i = 1}^{\binom{n}{r-1}} a_i (n-r+1 - a_i) \notag\\
	&=(n-r+1) r e(\mathcal{H}) - \sum_{i = 1}^{\binom{n}{r-1}} a_i^2 \notag\\
	&\leq (n-r+1) r e(\mathcal{H}) - \frac{1}{\binom{n}{r-1}} \left(r e(\mathcal{H}) \right)^2 &&\text{(by Jensen's ineq.)} \label{eq:conv}\\
	& = (n-r+1)r e(\mathcal{H}) - \frac{r^2}{\binom{n}{r-1}} e(\mathcal{H})^2. \label{eq:ub}
 \end{align}
Further, by convexity, equality holds in \eqref{eq:conv} if{f} all of the $a_i$s are equal.

Combining equations \eqref{eq:exact-count} and \eqref{eq:ub} shows that
\begin{equation}\label{eq:combined-bd}
e(\mathcal{H}) \leq \frac{n}{r^2} \binom{n}{r-1}
\end{equation}
and by the convexity properties of \eqref{eq:conv}, equality holds in equation \eqref{eq:combined-bd} if{f} every set of $r-1$ vertices is contained in exactly the same number of hyperedges.  By double counting, this means that equality holds if{f} every set of $r-1$ vertices is contained in exactly $n/r$ hyperedges of $\mathcal{H}$.
\end{proof}

Combining Proposition \ref{prop:ub0or2} with Theorems \ref{thm:paley0or2} and \ref{thm:paley-edge-count} we obtain:

\begin{corollary}
For each $q \equiv 3 \pmod{4}$,  $\mathcal{H}_q$ is maximal among $4$-hypergraphs on $q+1$ points with the property that every set of 5 vertices contains at most 2 hyperedges.
\end{corollary}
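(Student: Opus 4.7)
The proof will be a direct combination of the three results immediately preceding it, with no substantive new work required. The plan is to apply Proposition \ref{prop:ub0or2} with the parameters $r = 4$ and $n = q+1$, which yields the upper bound
\[
|E(\mathcal{H})| \leq \frac{q+1}{16}\binom{q+1}{3}
\]
for any $4$-uniform hypergraph $\mathcal{H}$ on $q+1$ vertices in which every $5$-set contains at most $2$ hyperedges. Since the hypothesis ``$0$ or $2$'' from Theorem \ref{thm:paley0or2} is a special case of ``at most $2$,'' the hypergraph $\mathcal{H}_q$ itself belongs to the class over which we are maximizing.

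Next, I would invoke Theorem \ref{thm:paley-edge-count}, which states exactly $e(\mathcal{H}_q) = \frac{q+1}{16}\binom{q+1}{3}$. Comparing with the upper bound above shows that $\mathcal{H}_q$ attains equality in Proposition \ref{prop:ub0or2}. Hence no member of the class under consideration has more hyperedges than $\mathcal{H}_q$, which is precisely the claim of maximality.

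There is no genuine obstacle here: the role of this corollary is to package together the constructive lower bound from Section \ref{sec:design} with the counting upper bound from Proposition \ref{prop:ub0or2}, and the arithmetic matches on the nose because the edge count of the $3$-design $\mathcal{H}_q$ and the bound $\frac{n}{r^2}\binom{n}{r-1}$ both evaluate to $\frac{q+1}{16}\binom{q+1}{3}$. The only thing worth remarking in the written proof is that Proposition \ref{prop:ub0or2} additionally tells us $\mathcal{H}_q$ is a $3$-design with every $3$-set in exactly $(q+1)/4$ hyperedges — a fact already verified independently in Theorem \ref{thm:paley-edge-count}, giving a consistency check rather than a new condition to establish.
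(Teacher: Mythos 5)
Your proposal is correct and is exactly the paper's argument: the corollary is stated as an immediate combination of Proposition \ref{prop:ub0or2} (with $r=4$, $n=q+1$), Theorem \ref{thm:paley0or2} (placing $\mathcal{H}_q$ in the relevant class), and Theorem \ref{thm:paley-edge-count} (showing the bound is attained). Nothing further is needed.
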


Note in particular that the upper bound given in Proposition \ref{prop:ub0or2} is attained for infinitely many values of $n$.

Furthermore, we have proved Corollary \ref{cor:ex-num} which gives the following collection of exact Tur\'{a}n numbers. 

\begin{corollary}
For any prime power $q \equiv 3\pmod{4}$,
\[
\ex(q+1, \{1234, 1235, 1245\}) = \frac{(q+1)}{16} \binom{q+1}{3}.
\]
\end{corollary}

\section{Tournaments}\label{sec:tourn}

\subsection{The extended Paley tournament and $\calH_q$}

We begin with the Paley hypergraphs $\mathcal{H}_q$ considered in Section \ref{sec:design}. For any prime power $q \equiv 3 \pmod{4}$, recall that the Paley tournament, denoted here by $T(q)$, is the tournament whose vertices are elements of $\mathbb{F}_q$ where the edges are directed $x \to y$ if{f} $y-x$ is a square in $\mathbb{F}_q$.  Note that this is a well-defined since $-1$ is not a square in $\mathbb{F}_q$ when $q \equiv 3 \pmod{4}$.  We shall consider a class of tournaments that contain a Paley tournament.

\begin{definition}\label{def:t*q}
For any prime power $q \equiv 3 \pmod{4}$, define a tournament, denoted $T^*(q)$ with vertex set $V := \{(x, 1) \mid x \in \mathbb{F}_q\} \cup \{(1,0)\}$ where for every pair of vertices $(a_1, a_2), (b_1, b_2)$, the edge is directed $(a_1, a_2) \to (b_1, b_2)$ if{f} $D\left( (b_1, b_2), (a_1, a_2)\right)$ is a square in $\mathbb{F}_q$.
\end{definition}

In other words, on $\{(x, 1) \mid x \in \mathbb{F}_q\}$, the tournament $T^*(q)$ is isomorphic to the Paley tournament $T(q)$ and all edges incident to the vertex $(1, 0)$ are directed towards it. Our next goal will be to show that $\mathcal{H}_{T^*(q)}$ is isomorphic to the hypergraph $\mathcal{H}_q$ constructed in Section 1. For this, we need the following observation which is a characterisation of tournaments of the form shown in Figure \ref{fig:tourn}.

Given a tournament on four vertices $\{x_1, x_2, x_3, x_4\}$ and a cyclic permutation of the vertices $(\pi(x_1)\ \pi(x_2)\ \pi(x_3)\ \pi(x_4))$, say that a pair $\{\pi(x_j), \pi(x_{j+1})\}$ of consecutive vertices in the permutation is `reverse-oriented' with respect to the permutation if the direction of the edge in the tournament is $\pi(x_{j+1}) \to \pi(x_j)$.

\begin{fact}\label{lem:circ}
Let $T$ be a tournament on four vertices $\{a,b,c,d\}$.  Then, $T$ has the property that in any cyclic permutation of the vertices, the number of reverse-oriented pairs is odd if{f} $T$ is isomorphic to one of the two tournaments in Figure \ref{fig:tourn}.
\end{fact}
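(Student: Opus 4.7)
The plan is to algebraize the cyclic-permutation condition and then settle the lemma by a small case check, using the classification of tournaments on four vertices.

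To each ordered pair of distinct vertices $(x,y)$, assign $\epsilon_{xy}=+1$ if $x\to y$ in $T$ and $\epsilon_{xy}=-1$ otherwise, so $\epsilon_{yx}=-\epsilon_{xy}$. For a cyclic permutation $(x_1,x_2,x_3,x_4)$, the pair $\{x_i,x_{i+1}\}$ is reverse-oriented exactly when $\epsilon_{x_ix_{i+1}}=-1$, so the number of reverse-oriented pairs is odd if and only if
\[
P(x_1,x_2,x_3,x_4):=\prod_{i=1}^{4}\epsilon_{x_ix_{i+1}}=-1
\]
(indices mod $4$). Since the cycle has even length, $P$ is invariant under reversing the cyclic direction, so $P$ depends only on the underlying undirected Hamiltonian cycle of $K_4$. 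There are exactly three such cycles, giving three quantities $P_1,P_2,P_3$ to track.

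Next I would establish the algebraic identity $P_1P_2P_3=-1$, valid for every tournament on four vertices. This follows by direct expansion: each edge of $K_4$ appears in exactly two of the three Hamiltonian cycles, so every $\epsilon$-factor enters squared and contributes $+1$, while the residual signs coming from the orientation conventions (terms of the form $\epsilon_{xy}\epsilon_{yx}=-1$) multiply to an overall factor of $-1$. The identity immediately implies the dichotomy that either all three of the $P_i$ equal $-1$ (the parity condition) or exactly one of them does.

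The forward direction of the ``iff'' is then a bounded calculation: I would evaluate $P_1,P_2,P_3$ on each of the two tournaments in Figure \ref{fig:tourn} and confirm all three equal $-1$. For the reverse direction I would invoke the classification of tournaments on four vertices up to isomorphism by their score sequences---there are exactly four, namely $(0,1,2,3)$ (transitive), $(1,1,1,3)$, $(0,2,2,2)$, and $(1,1,2,2)$---and observe that the middle two classes are precisely the two tournaments in Figure \ref{fig:tourn}. It then suffices, for the transitive tournament and the unique $(1,1,2,2)$-tournament, to exhibit a single Hamiltonian cycle on which the reverse-count is even, ruling them out via the dichotomy.

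The main obstacle I expect is keeping the sign conventions consistent when verifying the identity $P_1P_2P_3=-1$; once that identity and the classification of $4$-vertex tournaments are in hand, the rest of the argument is a routine finite verification.
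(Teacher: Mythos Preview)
The paper states this result as a Fact without proof, leaving it as a direct verification on the four isomorphism classes of tournaments on four vertices. Your proposal is correct and gives a clean way to organise that verification.

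Your product identity $P_1P_2P_3=-1$ is precisely the tournament analogue of equation~\eqref{eq:threeS} in the proof of Lemma~\ref{lem:relabel} (where $S(a,b,c,d)S(a,b,d,c)S(a,c,b,d)=-1$), so your argument and the paper's treatment of $\mathcal{H}_q$ are really the same computation in two guises. Concretely, with the three Hamiltonian cycles written as $(a,b,c,d)$, $(a,b,d,c)$, $(a,c,b,d)$, three of the six edges occur with matching orientation in both of their appearances and three with opposite orientation, giving the factor $(-1)^3=-1$; this is the bookkeeping you flagged as the only delicate point. The classification of $4$-vertex tournaments by score sequence is standard (there are exactly four classes, each determined by its score sequence), and exhibiting a single even-parity Hamiltonian cycle in the transitive and $(1,1,2,2)$ cases finishes the argument via the dichotomy.
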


\begin{theorem}\label{thm:isotourn}
For any prime power $q \equiv 3 \pmod{4}$, the hypergraph $\mathcal{H}_{T^*(q)}$ constructed from the tournament $T^*(q)$ is isomorphic to the hypergraph $\mathcal{H}_q$.
\end{theorem}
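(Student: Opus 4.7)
The plan is to verify, under the natural identification of vertex sets $(x,1) \leftrightarrow [x:1]$ and $(1,0) \leftrightarrow [1:0]$, that the edge sets of $\mathcal{H}_{T^*(q)}$ and $\mathcal{H}_q$ coincide. To do this, I would reformulate both edge conditions in the same algebraic language built out of $\chi$ and $D$.

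By definition of $T^*(q)$, the edge $a \to b$ occurs exactly when $\chi(D(b,a)) = +1$; equivalently, the ``reverse-oriented'' condition from Fact \ref{lem:circ} for an ordered consecutive pair $(x,y)$ (meaning the edge is directed $y \to x$) is exactly $\chi(D(x,y)) = +1$. For the cyclic permutation $(a,b,c,d)$, whose consecutive pairs are $(a,b), (b,c), (c,d), (d,a)$, the number of reverse-oriented pairs is therefore odd if and only if
\[
\chi(D(a,b)) \chi(D(b,c)) \chi(D(c,d)) \chi(D(d,a)) = \chi\bigl(D(a,b) D(b,c) D(c,d) D(d,a)\bigr)
\]
equals $-1$, that is, if and only if $S(a,b,c,d) = -1$. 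Repeating the calculation with the other two cyclic orderings $(a,b,d,c)$ and $(a,c,b,d)$ gives the analogous equivalences for $S(a,b,d,c)$ and $S(a,c,b,d)$.

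Combining Fact \ref{lem:circ} with these three computations, $\{a,b,c,d\} \in E(\mathcal{H}_{T^*(q)})$ iff the induced sub-tournament is of one of the types in Figure \ref{fig:tourn} iff each of $S(a,b,c,d)$, $S(a,b,d,c)$, $S(a,c,b,d)$ equals $-1$, which by Lemma \ref{lem:relabel} is exactly the defining condition for $\{a,b,c,d\} \in E(\mathcal{H}_q)$. Since the three cyclic orderings above exhaust the equivalence classes of permutations of $\{a,b,c,d\}$ under cyclic and reverse-cyclic relabeling (and $S$ is invariant under both), the two edge sets agree. The main obstacle is careful sign bookkeeping: because $\chi(-1) = -1$ (which uses $q \equiv 3 \pmod 4$) and $D(y,x) = -D(x,y)$, one must confirm that the three character-product expressions produced by Fact \ref{lem:circ} line up cleanly with the three $S$-values singled out by Lemma \ref{lem:relabel}, and in particular that no extra sign is introduced from the wrap-around term $D(d,a)$ in each cyclic product.
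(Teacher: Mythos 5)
Your proof is correct and, for the forward direction, is essentially the paper's argument: translate edge orientations of $T^*(q)$ into values of $\chi(D(\cdot,\cdot))$ so that the parity condition in Fact~\ref{lem:circ} becomes exactly the condition $S=-1$ on the three cyclic orderings singled out by Lemma~\ref{lem:relabel}. Where you genuinely differ is the converse: the paper shows that non-edges of $\mathcal{H}_q$ map to non-edges of $\mathcal{H}_{T^*(q)}$ by a separate case analysis of the three possible sub-tournaments induced on a non-edge (Figure~\ref{fig:non-edge-tourn}), checking by hand that none has the structure of Figure~\ref{fig:tourn}, whereas you get both directions at once from a single biconditional chain; your route is cleaner and in fact more complete, since the paper's forward direction silently asserts the very equivalence you compute explicitly. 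One caveat about the sign bookkeeping you rightly identify as the crux: the identity $\chi\bigl(D(a,b)D(b,c)D(c,d)D(d,a)\bigr)=S(a,b,c,d)$ matches the convention used in all of the paper's actual computations (in the proof of Lemma~\ref{lem:relabel} and throughout Theorem~\ref{thm:paley0or2}), but the displayed definition of $S$ has $D(a,d)$ rather than $D(d,a)$ as its last factor, and these differ by $\chi(-1)=-1$. Under the literal displayed definition your cyclic product would equal $-S(a,b,c,d)$ and every equivalence in your chain would flip sign, so you should explicitly note that the displayed definition carries a typo (it is the wrap-around convention with $D(d,a)$ that makes both the paper's results and your argument consistent) rather than leaving the sign check as a promissory note.
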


\begin{proof}
We show that the natural mapping $\Phi$ from $\mathcal{H}_q$ to $\mathcal{H}_{T^*(q)}$ which  sends a vertex $[x:1]$ to $(x,1)$ and $[1:0]$ to $(1,0)$ induces an isomorphism.

Let $\{[a_1:a_2],[b_1:b_2],[c_1:c_2],[d_1:d_2]\}$ be an edge of $\mathcal{H}_q$ so that for any permutation $\pi$, $$S(\pi([a_1:a_2]),\pi([b_1:b_2]),\pi([c_1:c_2]),\pi([d_1:d_2]))=-1.$$  This is equivalent to saying that in the subgraph induced by the image of $\{[a_1:a_2],[b_1:b_2],[c_1:c_2],[d_1:d_2]\}$ under $\Phi$, any cyclic permutation has an odd number of reverse-oriented consecutive pairs. Now by Fact \ref{lem:circ}, this subgraph is isomorphic to one of the two graphs in Figure 1 and $\{\Phi([a_1:a_2]),\Phi([b_1:b_2]),\Phi([c_1:c_2]),\Phi([d_1:d_2])\}$ is a hyperedge in $\mathcal{H}_{T^*(q)}$.

Conversely, let $\{x, y, z, w\}$ be a non-edge in $\mathcal{H}_q$.  Then, possibly after relabelling, the edges between the four vertices are as in one of the three possibilities in Figure \ref{fig:non-edge-tourn}.

\begin{figure}[htb]
\begin{center}
\begin{tikzpicture}
	[decoration={markings, mark=at position 0.6 with {\arrow{>}}}] 
	\tikzstyle{vertex}=[circle, draw=black,  minimum size=5pt,inner sep=0pt]
	
	\node[vertex, label=left:{$x$}] at (0, 0) (x) {};
	\node[vertex, label=left:{$y$}] at (0, 1) (y) {};
	\node[vertex, label=right:{$z$}] at (1, 1) (z) {};
	\node[vertex, label=right:{$w$}] at (1, 0) (w) {};
	
	\draw[postaction={decorate}] (x) -- (y);
	\draw[postaction={decorate}] (y) -- (z);
	\draw[postaction={decorate}] (z) -- (w);
	\draw[postaction={decorate}] (w) -- (x);
\end{tikzpicture} \hspace{20pt}
\begin{tikzpicture}
	[decoration={markings, mark=at position 0.6 with {\arrow{>}}}] 
	\tikzstyle{vertex}=[circle, draw=black,  minimum size=5pt,inner sep=0pt]
	
	\node[vertex, label=left:{$x$}] at (0, 0) (x) {};
	\node[vertex, label=left:{$y$}] at (0, 1) (y) {};
	\node[vertex, label=right:{$z$}] at (1, 1) (z) {};
	\node[vertex, label=right:{$w$}] at (1, 0) (w) {};
	
	\draw[postaction={decorate}] (y) -- (x);
	\draw[postaction={decorate}] (y) -- (z);
	\draw[postaction={decorate}] (w) -- (z);
	\draw[postaction={decorate}] (w) -- (x);
\end{tikzpicture} \hspace{20pt}
\begin{tikzpicture}
	[decoration={markings, mark=at position 0.6 with {\arrow{>}}}] 
	\tikzstyle{vertex}=[circle, draw=black,  minimum size=5pt,inner sep=0pt]
	
	\node[vertex, label=left:{$x$}] at (0, 0) (x) {};
	\node[vertex, label=left:{$y$}] at (0, 1) (y) {};
	\node[vertex, label=right:{$z$}] at (1, 1) (z) {};
	\node[vertex, label=right:{$w$}] at (1, 0) (w) {};
	
	\draw[postaction={decorate}] (x) -- (y);
	\draw[postaction={decorate}] (y) -- (z);
	\draw[postaction={decorate}] (w) -- (z);
	\draw[postaction={decorate}] (x) -- (w);
\end{tikzpicture}

\end{center}
\caption{Three possible edge orientations for a non-hyperedge of $\mathcal{H}_q$}
\label{fig:non-edge-tourn}
\end{figure}
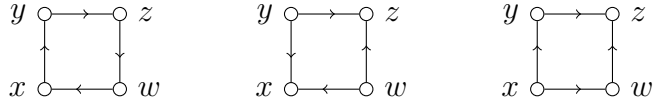

In the first case, on the left, the set $\{x, y, z, w\}$ is not a hyperedge in $\mathcal{H}_{T^*(q)}$ because no vertex has all edges directed towards or away from the other three.  In the second case, in the middle, the set is not a hyperedge in $\mathcal{H}_{T^*(q)}$ because no three vertices are part of a cyclic ordering.  Finally, in the third case on the right, the vertex $x$ could have all edges directed out, but the vertices $\{y, z, w\}$ are not a subset of a cyclic ordering.  Similarly, the vertex $z$ could have all edges directed in, but the remaining vertices are not cyclically ordered.  This shows that non-edges in $\mathcal{H}_q$ are mapped to non-edges in $\mathcal{H}_{T^*(q)}$.

Thus, the two hypergraphs are isomorphic.
\end{proof}

Note that, in light of the remarks at the start of this section, Theorem \ref{thm:isotourn} supplies a different proof of the fact that in $\mathcal{H}_q$ any set of 5 vertices span 0 or 2 edges.

\subsection{Switching tournaments}\label{sec:switching}

Recall from the introduction that Baber's construction associates to each tournament $T$ a 4-hypergraph $\calH_T$ with the property that 5 vertices span 0 or 2 hyperedges. In this section we consider the opposite problem of associating a tournament to any hypergraph satisfying this condition. 

The notion, examined by Frankl and F\"{u}redi \cite{FF84}, of $3$-uniform hypergraphs in which every $4$ vertices span $0$ or $2$ hyperedges is a special case of what is called a `two-graph' (not to be confused with a graph).  Two-graphs were introduced by Higman (see \cite{dT77}) and are defined to be $3$-uniform hypergraphs with the property that every set of $4$ vertices spans an even number of hyperedges.  As described by Cameron and van Lint \cite{CL}, a two-graph can be constructed from a graph $G = (V, E)$ by defining a hypergraph on $V$ whose hyperedges are the sets of $3$ vertices that contain an odd number of edges in $G$.  Furthermore, every two-graph arises from such a construction.  A survey on two-graphs was given by Seidel and Taylor \cite{ST81}.

For example, the $3$-uniform hypergraph on $6$ vertices with $10$ hyperedges given by Frankl and F\"{u}redi in \cite{FF84} corresponds to the graph shown in Figure \ref{fig:two-graph}.  Note that the graph in Figure \ref{fig:two-graph} is the Paley graph for $\mathbb{F}_5$ with an additional isolated vertex (labeled $5$).  This is the only such construction from a Paley graph with the property that every set of $4$ vertices contain either $0$ or $2$ subsets of size $3$ that span an odd number of edges.  Indeed, all other Paley graphs either contain a copy of $K_4$ or else an induced subgraph consisting of $K_3$ and an isolated vertex.

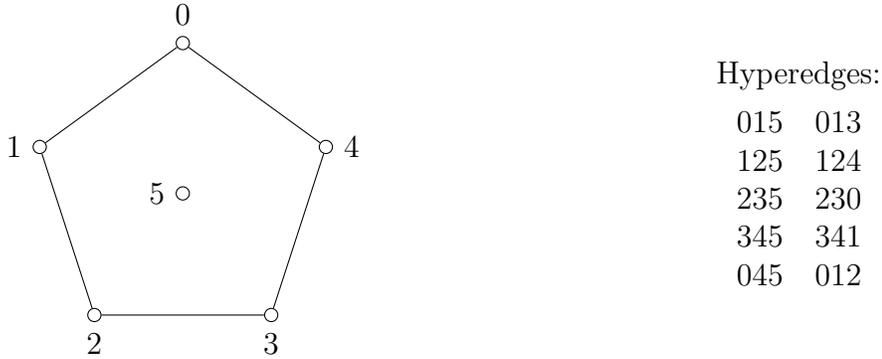
\begin{figure}[htb]
\begin{minipage}{0.5\linewidth}
\begin{center}
\begin{tikzpicture}
	\tikzstyle{vertex}=[circle, draw=black,  minimum size=5pt,inner sep=0pt]
	\node[vertex, label=left:{$5$}] at (0, 0) (5) {};
	\node[vertex, label=right:{$4$}] at (18:2) (4) {};
	\node[vertex, label=above:{$0$}] at (90:2) (0) {};
	\node[vertex, label=left:{$1$}] at (162:2) (1) {};
	\node[vertex, label=below:{$2$}] at (234:2) (2) {};
	\node[vertex, label=below:{$3$}] at (306:2) (3) {};
	\draw (0) -- (1) -- (2) -- (3) -- (4) -- (0); 
\end{tikzpicture}
\end{center}
\end{minipage}
\begin{minipage}{0.5\linewidth}
\begin{center}
Hyperedges:\\ \vspace*{5pt}
\begin{tabular}{ll}
015	&013\\
125	&124\\
235	&230\\
345 &341\\
045 &012
\end{tabular}
\end{center}
\end{minipage}
\caption{Two-graph representation of $3$-uniform hypergraph from \cite{FF84}}
\label{fig:two-graph}
\end{figure}

In \cite{CAM77} Cameron introduced the notion of an \textit{oriented two-graph} which, like the hypergraphs $\calH_T$ may also be associated to a tournament $T$. Indeed, if $T$ is regarded as an antisymmetric function $f$ from ordered pairs of distinct vertices to $\{ \pm 1\}$ (where $f(x, y) = 1$ if and only if there is a directed edge from $x$ to $y$) then the associated oriented two-graph is given by the function $g$ defined on ordered triples of distinct vertices $x,y,z$ as follows: $$g(x,y,z)=f(x,y)f(y,z)f(z,x)$$ (see \cite[Section 2]{BC2000}). We contrast this to the definition of $\calH_T$ which may be regarded as a function defined on unordered quadruples of distinct vertices. 

The following operation on tournaments was introduced by Moorhouse \cite{EM95} in connection to oriented two-graphs.

\begin{definition}\label{def:switch}
Given a tournament $T$ on vertex set $V$ and a set $A \subseteq V$, then \emph{$T$ switched with respect to $A$} is the tournament obtained from $T$ by reversing the orientation of all edges between $A$ and $V \setminus A$.

Two tournaments $T_1$ and $T_2$ both on vertex set $V$ are said to be \emph{switching equivalent} if{f} there exists $A \subseteq V$ so that $T_2$ is precisely $T_1$ switched with respect to $A$.
\end{definition}

Moorhouse \cite{EM95} also notes that if $M$ is the $(0, \pm 1)$-adjacency matrix of a tournament $T$ and $T'$ is a switching equivalent tournament, then there is a $(\pm 1)$-diagonal matrix $D$ so that the $(0, \pm 1)$-adjacency matrix of $T'$ is $DMD$.  Note that switching with respect to the empty set (or equivalently the entire vertex set) is a legal operation, but leaves the tournament unchanged.  One can further verify directly that being switching equivalent is indeed an equivalence relation.  To show transitivity, note that switching with respect to a set $A$ and then switching with respect to a set $B$ corresponds to switching with respect to the set $(A \cap B) \cup (A^c \cap B^c)$.

The notion of switching for tournaments is closely related to a concept of switching in graph theory (see \cite[Chapter 4]{CL}). 

It was shown by Moorhouse \cite{EM95} that two tournaments are switching equivalent if and only if they induce the same oriented two-graph. In one direction, an analogous statement holds for the hypergraphs $\calH_T$.

\begin{lemma}\label{l:tt'switch}
Let $T,T'$ be two tournaments which are switching equivalent. Then $\calH_T=\calH_{T'}$.
\end{lemma}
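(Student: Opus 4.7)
The plan is to verify edge-set invariance one 4-subset at a time, using the characterization supplied by Fact \ref{lem:circ}. By the definition of switching equivalence it is enough to handle the case where $T'$ is obtained from $T$ by switching with respect to a single set $A \subseteq V$. Fix any 4-set $\{a,b,c,d\} \subseteq V$ and set $S := A \cap \{a,b,c,d\}$. Switching reverses precisely those edges of $T$ that have one endpoint in $S$ and the other outside of $S$; all other edges are left alone. In particular, within the 4-set exactly the edges of the bipartite graph between $S$ and $\{a,b,c,d\} \setminus S$ get reversed.

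The key observation is that in any 4-cycle on the vertex set $\{a,b,c,d\}$, the number of edges crossing the bipartition $(S, \{a,b,c,d\} \setminus S)$ is always even, since any cycle crosses any vertex bipartition an even number of times. Hence, for each of the three distinct cyclic orderings of the 4-set, the switching operation reverses the orientations of an even number of the four consecutive pairs. Consequently the parity of the number of reverse-oriented pairs in each cyclic ordering is preserved.

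Applying Fact \ref{lem:circ}, the set $\{a,b,c,d\}$ induces a sub-tournament of one of the two shapes shown in Figure \ref{fig:tourn} in $T$ if and only if it does so in $T'$. Therefore $\{a,b,c,d\} \in \mathcal{H}_T$ if and only if $\{a,b,c,d\} \in \mathcal{H}_{T'}$, and since this holds for every 4-set we conclude $\mathcal{H}_T = \mathcal{H}_{T'}$. The argument is entirely a parity count; the only mild subtlety is the translation between Fact \ref{lem:circ}'s combinatorial description of hyperedges and the statement that an even number of edges in each 4-cycle get flipped by a switch, so no genuine obstacle is anticipated.
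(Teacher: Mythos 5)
Your proof is correct and follows essentially the same route as the paper: reduce to a single 4-set, note that a switch flips an even number of the four consecutive pairs in any cyclic ordering (since a cycle crosses a vertex bipartition an even number of times), so the parity of reverse-oriented pairs is preserved, and then invoke Fact \ref{lem:circ}. No issues.
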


\begin{proof}
It suffices to consider the effect of switching on tournaments on $4$ vertices.  Note that for any cyclic permutation of $4$ vertices $(x_1\ x_2\ x_3\ x_4)$, any switching operation changes the orientation of the edges between either $0$, $2$ or $4$ of the pairs $\{x_1, x_2\}, \{x_2, x_3\}, \{x_3, x_4\}$, and $\{x_4, x_1\}$.  Thus, if the number of reverse-oriented edges is odd before switching, it remains odd after switching also.  Thus, by Fact \ref{lem:circ}, both edges and non-edges in hypergraphs $\calH_T$ are preserved by switching.
\end{proof}

The converse to Lemma \ref{l:tt'switch} fails because the first tournament in Figure 1 is clearly not switching equivalent to the tournament obtained by reversing all edges. 

In fact, even if one allows the additional operation of ``reversing all the edges,'' the converse still fails as the two tournaments on 5 vertices, given in Figure \ref{fig:not-switching} show.  The two tournaments differ only in the orientation of the edge between vertices $1$ and $5$ and yet both yield the $4$-uniform hypergraph $\{1234, 2345\}$.

\begin{figure}[htb]
\begin{center}
\begin{tikzpicture}
	[decoration={markings, mark=at position 0.6 with {\arrow{>}}}] 
	\tikzstyle{vertex}=[circle, draw=black,  minimum size=5pt,inner sep=0pt]
	
	\node[vertex, label=above:{$1$}] at (90:2) (1) {};
	\node[vertex, label=right:{$2$}] at (18:2) (2) {};
	\node[vertex, label=below:{$3$}] at (306:2) (3) {};
	\node[vertex, label=below:{$4$}] at (234:2) (4) {};
	\node[vertex, label=left:{$5$}] at (162:2) (5) {};
	
	\draw[postaction={decorate}] (1) -- (2);
	\draw[postaction={decorate}]  (1) -- (3);
	\draw[postaction={decorate}]  (1) -- (4);
	\draw[postaction={decorate}, ultra thick]  (1) -- (5);
	\draw[postaction={decorate}]  (3) -- (2);
	\draw[postaction={decorate}] (2) -- (4);
	\draw[postaction={decorate}]  (5) -- (2);
	\draw[postaction={decorate}]  (4) -- (3);
	\draw[postaction={decorate}] (5) -- (3);
	\draw[postaction={decorate}] (5) -- (4);	
\end{tikzpicture} \hspace{20pt}
\begin{tikzpicture}
	[decoration={markings, mark=at position 0.6 with {\arrow{>}}}] 
	\tikzstyle{vertex}=[circle, draw=black,  minimum size=5pt,inner sep=0pt]
	
	\node[vertex, label=above:{$1$}] at (90:2) (1) {};
	\node[vertex, label=right:{$2$}] at (18:2) (2) {};
	\node[vertex, label=below:{$3$}] at (306:2) (3) {};
	\node[vertex, label=below:{$4$}] at (234:2) (4) {};
	\node[vertex, label=left:{$5$}] at (162:2) (5) {};
	
	\draw[postaction={decorate}] (1) -- (2);
	\draw[postaction={decorate}]  (1) -- (3);
	\draw[postaction={decorate}]  (1) -- (4);
	\draw[postaction={decorate}, ultra thick]  (5) -- (1);
	\draw[postaction={decorate}]  (3) -- (2);
	\draw[postaction={decorate}] (2) -- (4);
	\draw[postaction={decorate}]  (5) -- (2);
	\draw[postaction={decorate}]  (4) -- (3);
	\draw[postaction={decorate}] (5) -- (3);
	\draw[postaction={decorate}] (5) -- (4);	
\end{tikzpicture}
\end{center}
\caption{Two tournaments that determine the hypergraph $\{1234, 2345\}$}
\label{fig:not-switching}
\end{figure}
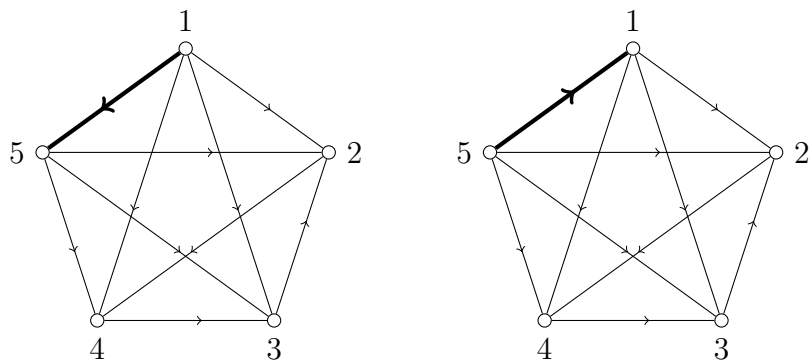

The following application of switching is used in further results to come and shows that the construction of the extended Paley tournament with one vertex having all incident edges directed towards it is not a particularly unusual condition.

\begin{lemma}\label{lem:univ}
Let $T$ be a tournament on vertex set $V$ and let $w \in V$.  There is a tournament $T'$ that is switching equivalent to $T$ in which all edges incident to $w$ are directed towards $w$.
\end{lemma}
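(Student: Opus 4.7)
The plan is to exhibit an explicit switching set $A \subseteq V$ that works. Let
\[
A := \{v \in V \setminus \{w\} \mid \text{in } T \text{, the edge between } v \text{ and } w \text{ is directed } w \to v\},
\]
that is, $A$ is the out-neighbourhood of $w$ in $T$. Note in particular that $w \notin A$. Let $T'$ denote $T$ switched with respect to $A$, in the sense of Definition \ref{def:switch}. Then $T'$ is switching equivalent to $T$ by construction, so it remains only to verify the orientation condition at $w$.

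To do this, I would just split the other vertices into two cases. For any $v \in A$, the edge between $v$ and $w$ lies between $A$ and $V \setminus A$ (since $w \in V \setminus A$), so switching reverses its orientation: the edge $w \to v$ in $T$ becomes $v \to w$ in $T'$. For any $u \in V \setminus (A \cup \{w\})$, both endpoints of the edge $\{u, w\}$ lie in $V \setminus A$, so switching leaves this edge untouched; by the definition of $A$, this edge was already directed $u \to w$ in $T$ and so remains $u \to w$ in $T'$. Hence every edge incident to $w$ in $T'$ is directed towards $w$, as required.

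There is no real obstacle here; the only thing to get right is to ensure that $w \notin A$, so that edges from $w$ to its out-neighbours actually straddle the partition $(A, V \setminus A)$ and therefore do get reversed by the switching operation. Choosing $A$ to be the out-neighbourhood of $w$ (rather than, say, $A = N^+(w) \cup \{w\}$) makes this automatic.
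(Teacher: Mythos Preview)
Your proof is correct and is essentially identical to the paper's: both take $A$ to be the out-neighbourhood of $w$ in $T$ (excluding $w$) and observe that switching with respect to $A$ reverses exactly the edges from $w$ to its out-neighbours. You simply spell out the two cases in slightly more detail than the paper does.
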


\begin{proof}
Let $A$ be the set of vertices in $a \in V\setminus \{w\}$ for which the edge between $a$ and $w$ is directed away from $w$.  Switching $T$ with respect to $A$ yields a tournament with all edges incident to $w$ directed towards it.
\end{proof}

Similarly, one could obtain a tournament with all edges directed away from a particular vertex.

As an application of switching, we have the following result which gives a \textit{necessary} condition on a 4-hypergraph $\calH$ for the existence of a tournament $T$ with $\calH=\calH_T$:

\begin{proposition}\label{p:oddcycle}
Let $\calH$ be a 4-hypergraph with the property that $\calH=\calH_T$ for some tournament $T$ with vertex set $V$. Then for each $u,v \in V$ the graph on $T \backslash \{u,v\}$ with edge set
\[
L_\calH(u,v):=\{\{x,y\} \mid x,y \in V \backslash \{u,v\}, \{u,v,x,y\} \in \calH\}
\]
(the link of $\{u,v\}$) is a bipartite graph on $V \backslash \{u,v\}$.
\end{proposition}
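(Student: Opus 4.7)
The plan is to use the switching operation of Lemma \ref{lem:univ} to normalize $T$ at the vertex $u$, after which the hyperedges of $\calH$ containing the pair $\{u,v\}$ become precisely the $4$-sets $\{u,v,x,y\}$ for which $\{v,x,y\}$ is a directed $3$-cycle in $T\setminus\{u\}$. The bipartition of $N_\calH(u,v)$ will then be the in/out split of $v$ in this subtournament.

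First I would apply Lemma \ref{lem:univ} to produce a tournament $T'$ that is switching-equivalent to $T$ and in which every edge incident to $u$ is oriented towards $u$. By Lemma \ref{l:tt'switch} one has $\calH_{T'}=\calH_T=\calH$, so it suffices to prove the proposition for $T'$; I drop the prime. By Baber's construction (Figure \ref{fig:tourn}, formalised in Fact \ref{lem:circ}), $\{u,v,x,y\}\in\calH$ iff three of these vertices span a directed triangle while the remaining ``apex'' vertex has its three incident edges inside the $4$-set oriented in a single common direction. Since every edge at $u$ points towards $u$, the vertex $u$ cannot lie on any directed $3$-cycle, so $u$ is forced to be the apex; moreover the uniform-direction condition at $u$ is then automatic (all three incident edges already point inward). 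Thus, under the normalisation,
\[
\{u,v,x,y\}\in\calH \iff \{v,x,y\}\text{ is a directed $3$-cycle in }T\setminus\{u\}.
\]

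To conclude, I would partition $V\setminus\{u,v\}$ into $A:=\{w:v\to w\}$ and $B:=\{w:w\to v\}$. The triangle $\{v,x,y\}$ is a directed $3$-cycle precisely when exactly one of $x,y$ lies in $A$: if both sit in $A$ then $v$ is a source in the triangle, and if both sit in $B$ then $v$ is a sink, so in either case no $3$-cycle arises. Hence every edge of $N_\calH(u,v)$ crosses the partition $(A,B)$, which exhibits $N_\calH(u,v)$ as a bipartite graph on $V\setminus\{u,v\}$. The only step that requires genuine care is the identification above, namely verifying that after the switching normalisation $u$ must play the role of the apex and that the uniform-direction condition is then free; the remainder of the argument is routine bookkeeping on the in/out neighbourhoods of $v$.
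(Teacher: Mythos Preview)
Your proof is correct and follows essentially the same approach as the paper: both apply Lemma~\ref{lem:univ} to make $u$ a universal sink, then use the in/out partition at $v$ to witness bipartiteness. The paper phrases the last step as ``vertices along any cycle in $N_\calH(u,v)$ alternate between in- and out-neighbours of $v$, hence every cycle is even,'' whereas you exhibit the bipartition $(A,B)$ directly; these are the same argument. One small wording issue: your claim that $\{v,x,y\}$ is a directed $3$-cycle \emph{precisely when} exactly one of $x,y$ lies in $A$ is too strong (if $x\in A$, $y\in B$ but $y\to x$, the triangle is transitive), but your justification and conclusion only use the ``only if'' direction, which is all that is needed.
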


\begin{proof}
Suppose that $\calH=\calH_T$ for some tournament $T$ with vertex set $V$.  By Lemma \ref{lem:univ}, we can assume that all edges incident to $u$ are directed towards $u$ in $T$.  If $N_{\calH}(u, v)$ contains no cycles, then it is a tree and so bipartite.  Suppose that $L_{\calH}(u, v)$ contains a cycle with vertices $a_1, a_2, a_3, \ldots, a_k$ (in that order).  

Consider now the orientations of edges in $T$ between consecutive vertices in the cycle and the vertices of the cycle and $v$.  Let $i \leq k-1$ and suppose that the edge between $a_i$ and $a_{i+1}$ is directed $a_i \to a_{i+1}$.  Since  $\{u, v, a_i, a_{i+1}\} \in \calH_T$ and all edges incident to $u$ are directed towards $u$, then the remaining edges incident to $v$ are directed $v \to a_i$ and $a_{i+1} \to v$.  Similarly, if the edge between $a_i$ and $a_{i+1}$ is directed $a_{i+1} \to a_i$, then the edges incident to $v$ are directed $a_i \to v$ and $v \to a_{i+1}$.  In either case, the vertices of the cycle alternate between being in-neighbours of $v$ and out-neighbours of $v$.  Since this also holds for the pair $\{a_{k}, a_1\}$, then $k$ is even.

Thus, if the graph $L_{\calH}(u,v)$ contains any cycles, they are even cycles and hence $L_{\calH}(u,v)$ is a bipartite graph.
\end{proof}

The converse of Proposition \ref{p:oddcycle} is not true.  For example, the $4$-uniform hypergraph given by sets in \eqref{eq:non-tourn} has the property that every $5$-set of vertices spans either $0$ or $2$ hyperedges and the neighbourhood graph of every pair of vertices is bipartite, but one can verify directly that the hypergraph can not be represented by a tournament.
\begin{equation}\label{eq:non-tourn}
\begin{tabular}{llll}
	$\{6, 7, 8, 11 \}$,  &$\{6, 7, 9, 12\}$, &$\{6, 7, 10, 11\}$,  &$\{6, 7, 10, 12\}$,\\
	$\{6, 8, 9, 12\}$,  &$\{6, 9, 10, 11\}$, &$\{6, 9, 11, 12\}$, &$\{7, 8, 9, 11\}$,\\
	$\{7, 8, 10, 12\}$,  &$\{7, 8, 11, 12\}$, &$\{8, 9, 10, 11\}$, &$\{8, 9, 10, 12\}$
\end{tabular}
\end{equation}
The hypergraph given in Equation \eqref{eq:non-tourn} consists of the hyperedges in $\mathcal{M}$ that contain none of the vertices from $\{1, 2, 3, 4, 5\}$.

Proposition \ref{p:oddcycle} can be used to show that even extremal hypergraphs for the property that any $5$ vertices span either $0$ or $2$ hyperedges need not arise from tournaments.

\begin{corollary}\label{cor:M11-notourn}
There is no tournament $T$ such that $\mathcal{M} = \mathcal{H}_T$.
\end{corollary}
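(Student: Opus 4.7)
The plan is to invoke Proposition \ref{p:oddcycle} as a necessary condition and argue by contradiction. If $\mathcal{M} = \mathcal{H}_T$ for some tournament $T$ on the vertex set of $\mathcal{M}$, then for every pair of vertices $u,v$ the neighbourhood graph $N_{\mathcal{M}}(u,v)$ must be bipartite. It therefore suffices to exhibit a single pair $\{u,v\}$ for which $N_{\mathcal{M}}(u,v)$ contains an odd cycle.

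First I would reduce to a single pair using the symmetry of $\mathcal{M}$. By Theorem \ref{thm:M11}(c) the automorphism group of $\mathcal{M}$ contains $M_{11}$ acting $3$-transitively, hence $2$-transitively, on the $12$ points. Consequently all graphs $N_{\mathcal{M}}(u,v)$ are pairwise isomorphic, and it is enough to examine one. I would also record, for orientation, that since $\mathcal{M}$ is a $3$-$(12,4,3)$-design (Theorem \ref{thm:M11}(b)), a standard double-count shows that every pair of vertices is contained in exactly $15$ hyperedges; thus each $N_{\mathcal{M}}(u,v)$ is a graph on $10$ vertices with $15$ edges, leaving ample room for short cycles.

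Second, I would fix a convenient pair, say $\{u,v\} = \{1,2\}$, and read off from the block list in the appendix the $15$ hyperedges containing both $1$ and $2$, recording the resulting $15$ pairs of remaining vertices as the edges of $N_{\mathcal{M}}(1,2)$. Then I would exhibit an explicit odd cycle, ideally a triangle: three vertices $x,y,z\in\{3,\ldots,12\}$ such that each of $\{1,2,x,y\}$, $\{1,2,y,z\}$, $\{1,2,x,z\}$ is a hyperedge. By Proposition \ref{p:oddcycle} this rules out $\mathcal{M} = \mathcal{H}_T$ for every tournament $T$. The main (and only) obstacle is the routine bookkeeping of locating such a triangle from the explicit listing; this can be done by direct inspection or, as elsewhere in the paper, by a brief computer check in SAGE \cite{wS15}. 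No further theoretical input beyond Proposition \ref{p:oddcycle} and the $2$-transitivity of $M_{11}$ is required.
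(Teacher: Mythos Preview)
Your approach is essentially the paper's: both invoke Proposition~\ref{p:oddcycle} and show that $N_{\mathcal{M}}(u,v)$ is not bipartite, the paper by citing \cite[Lemma~4.8]{DGHP} (or a GAP check) to identify $N_{\mathcal{M}}(u,v)$ as the Petersen graph, and you by direct inspection of the appendix together with the $2$-transitivity of $M_{11}$.

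One tactical correction: your ``ideally a triangle'' will not succeed. The graph $N_{\mathcal{M}}(1,2)$ is $3$-regular on $10$ vertices with $15$ edges and is indeed the Petersen graph, which has girth~$5$; so there is no triple $x,y,z$ with $\{1,2,x,y\},\{1,2,y,z\},\{1,2,x,z\}\in\mathcal{M}$. You must instead exhibit a $5$-cycle. From the appendix the blocks $\{1,2,3,7\}$, $\{1,2,6,7\}$, $\{1,2,4,6\}$, $\{1,2,4,10\}$, $\{1,2,3,10\}$ yield the $5$-cycle $3$--$7$--$6$--$4$--$10$--$3$ in $N_{\mathcal{M}}(1,2)$, and with this in hand your argument goes through unchanged.
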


\begin{proof}
For any two vertices $a, b \in [1, 12]$, consider the graph on $V(\calM) \backslash \{a,b\}$ given by the link of $\{a,b\}$. Thus the edges consist of pairs $$L_{\calM}(a, b)=\left\{\{c, d\} \mid c,d \in V(\calM) \backslash \{a,b\},  \{a,b,c, d\} \in \calM\right\}.$$  By \cite[Lemma 4.8]{DGHP} (or alternatively via a direct GAP \cite{GAP} computation), $L_{\calM}(a, b)$ is isomorphic to the Petersen graph, which is not bipartite. Thus, by Proposition \ref{p:oddcycle}, there is no tournament $T$ with $\calM = \mathcal{H}_T$.
\end{proof}

\section{Open questions}\label{sec:open}
Note that using the recursive construction of Frankl and F\"{u}redi \cite{FF84} with a Paley hypergraph, one can show that for any $\varepsilon >0$ and sufficiently large $n$, there is a $4$-uniform hypergraph with $\frac{1}{4}\binom{n}{4}(1-\varepsilon)$ hyperedges with the property that any $5$ vertices span at most $2$ hyperedges. This fact can also be deduced since for any $\varepsilon > 0$ and $n$ sufficiently large, there is a prime $q \equiv 3 \pmod{4}$ with $q \in [n, (1+\varepsilon)n]$.  
However, for divisibility reasons, if the upper bound given in Proposition \ref{prop:ub0or2} is attained, then the number of vertices in the graph is divisible by $4$.

\begin{question}\label{qn:existence}
For which natural numbers $n \equiv 0 \pmod 4$ does there exist a $3-(n,4,n/4)$ design with the property that every set of 5 vertices spans either 0 or 2 hyperedges? 
\end{question} 

One hint that $n$ must be of the form $q+1$ ($q$ a prime power) comes from the observation that all known extremal examples  have automorphism groups which are 3-transitive and by \cite[Tables 7.3 and 7.4]{CAM} the only such permutation groups $(G,n)$ (with $G$ acting on $n$ points) are $(\M_{11},12)$, $(\M_{22},22)$ and extensions of $(\PSL(2,q),q+1)$. We note that an easy GAP \cite{GAP} computation reveals that $\M_{22}$ does not have an orbit of the form we require.

In general, Question~\ref{qn:existence} may be quite difficult to answer.  The construction given in this paper does not seem to generalize naturally to values of $n$ which are not one more than a prime power.  

One might further ask for an improved upper bound in the cases when $n$ is not divisible by $4$.  A slight improvement in Proposition \ref{prop:ub0or2} follows immediately by convexity. \newline
\newline
While a complete classification of 4-hypergraphs with the property that every set of 5 vertices spans either 0 or 2 hyperedges (which parallels that given for 3-hypergraphs in \cite{FF84}) appears difficult, it may be of interest to compare the ways in which these hypergraphs arise. As previously mentioned, two natural sources of examples are given by:

\begin{itemize}
\item[(i)] finite subsets of $S^2$ (where edges are given by 4-subsets of points whose convex hull contains the origin);
\item[(ii)] the hypergraphs $\calH_T$ where $T$ is a tournament.
\end{itemize}

\begin{question}
What is the relationship between these two families of 4-hypergraphs? For example, can every hypergraph which arises from points on the unit sphere be realized using a tournament?
\end{question}

Finally, we make some remarks about future work \cite{GS2}. Since writing this paper, the authors made two key observations. 

Firstly, there are other switching classes $\mathcal{S}$ of tournaments on $r$-vertices (apart from that given in Figure \ref{fig:tourn}) with the property that 0 or 2 subtournaments of any tournament on $r+1$ vertices lie in $\mathcal{S}$. For example when $r=6$, it can be shown that the switching class of the tournament in Figure \ref{fig:tourn2} has this property. Thus by taking a random tournament on $n$ vertices and considering only those subsets of the vertex set which induce tournaments in $\mathcal{S}$ we can obtain new bounds for Tur\'{a}n densities. For example using the switching class $\mathcal{S}$ of the tournament in Figure \ref{fig:tourn2} it is shown in \cite{GS2} that

\begin{equation}\label{eq:density}
	\frac{9}{64} \leq  \pi(\{123456, 123457, 123467\}) \leq \frac{1}{6}.
\end{equation}

Secondly, let $T^*(q)$ be the tournament of Definition \ref{def:t*q} the vertices of which are acted naturally upon by $\PGL(2,q)$. Then for each $3 \le r \le q+1$ and switching class $\mathcal{S}$ of tournaments on $r$ vertices, it can be shown that the set of $r$-subsets of $V(T^*(q))$ which induce subtournaments of $T^*(q)$ in $\mathcal{S}$ is a union of orbits of  $\PGL(2,q)$. Theorem \ref{thm:main-const} (c) follows as a special case of this observation (on taking $r=4$ and $\mathcal{S}$ equal to the switching class of Figure \ref{fig:tourn}) and the hypergraph so obtained provided us with infinitely many Tur\'{a}n numbers. Surprisingly, different choices of $q$ and $\mathcal{S}$, yield further Tur\'{a}n numbers. For example in \cite{GS2} it is shown that when $q=11$ there are exactly $264$ 6-subsets of $V(T^*(11))$ 
which induce subtournaments in the switching class of the tournament in Figure \ref{fig:tourn2}. This observation, combined with the remarks in the previous paragraph and Proposition \ref{prop:ub0or2}, yields:

\[
\ex(12, \{123456, 123457, 123467\}) = 264.
\]

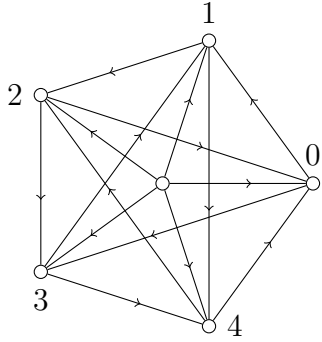
\begin{figure}[htb]
\begin{center}
\begin{tikzpicture}
	[decoration={markings, mark=at position 0.6 with {\arrow{>}}}] 
	\tikzstyle{vertex}=[circle, draw=black,  minimum size=5pt,inner sep=0pt]
	
	\node[vertex, label=above:{$0$}] at (2, 0) (y) {};
	\node[vertex, label=above:{$1$}] at (2*0.309, 2*0.95) (z) {};
	\node[vertex, label=left:{$2$}] at (2*-0.809, 2*0.588) (w) {};
    \node[vertex, label=below:{$3$}] at (2*-0.809, 2*-0.588) (a) {};
	\node[vertex, label=right:{$4$}] at (2*0.309, 2*-0.95) (b) {};	
    \node[vertex, label=right:{}] at (0, 0) (c) {};		
	
	\draw[postaction={decorate}] (y) -- (z);
	\draw[postaction={decorate}] (z) -- (w);
	\draw[postaction={decorate}] (w) -- (a);
	\draw[postaction={decorate}] (a) -- (b);
	\draw[postaction={decorate}] (b) -- (y);
	
	\draw[postaction={decorate}] (z) -- (b);
	\draw[postaction={decorate}] (b) -- (w);
	\draw[postaction={decorate}] (w) -- (y);
	\draw[postaction={decorate}] (y) -- (a);
	\draw[postaction={decorate}] (a) -- (z);
	
	\draw[postaction={decorate}] (c) -- (b);
	\draw[postaction={decorate}] (c) -- (w);
	\draw[postaction={decorate}] (c) -- (y);
	\draw[postaction={decorate}] (c) -- (a);
	\draw[postaction={decorate}] (c) -- (z);
	
\end{tikzpicture} \hspace{10pt}
\end{center}
\caption{A special tournament on 6 vertices}
\label{fig:tourn2}
\end{figure}

\section*{Acknowledgement}

The authors wish to thank John Talbot for the interesting discussion on this topic and Rahil Baber for sharing his tournament construction.  We would also like to thank Jonathan Bober for showing us the usefulness of the identity in equation \eqref{eq:sumLegpairs} for counting certain sets of squares in prime fields.

\section*{Appendix: Hypergraph $\mathcal{M}$ on $12$ vertices}\label{sec:M112}

\begin{tabular}{lllll}
$\{1, 2, 3, 7\}$     &$\{1, 5, 8, 9\}$     &$\{1, 4, 5, 6\}$     &$\{2, 5, 7, 11\}$     &$\{4, 5, 7, 10\}$\\
$\{1, 3, 8, 10\}$     &$\{3, 8, 9, 11\}$     &$\{1, 2, 8, 9\}$     &$\{2, 5, 6, 7\}$     &$\{3, 4, 6, 11\}$\\
$\{1, 7, 9, 11\}$     &$\{2, 4, 5, 12\}$     &$\{3, 4, 7, 9\}$     &$\{2, 3, 6, 10\}$     &$\{2, 4, 7, 10\}$\\
$\{3, 5, 8, 10\}$     &$\{5, 8, 10, 11\}$     &$\{1, 6, 8, 10\}$     &$\{4, 8, 10, 11\}$     &$\{6, 8, 9, 12\}$\\
$\{4, 5, 9, 10\}$     &$\{7, 8, 9, 11\}$     &$\{1, 6, 7, 9\}$     &$\{2, 3, 4, 6\}$     &$\{2, 3, 4, 7\}$\\
$\{2, 3, 4, 5\}$     &$\{2, 6, 8, 11\}$     &$\{3, 6, 7, 12\}$     &$\{1, 4, 5, 11\}$     &$\{1, 4, 10, 11\}$\\
$\{1, 2, 9, 11\}$     &$\{1, 4, 9, 12\}$     &$\{2, 7, 9, 12\}$     &$\{1, 2, 4, 8\}$     &$\{2, 5, 6, 9\}$\\
$\{4, 6, 7, 9\}$     &$\{3, 6, 8, 9\}$     &$\{3, 5, 6, 10\}$     &$\{1, 2, 3, 9\}$     &$\{2, 6, 7, 10\}$\\
$\{3, 7, 8, 10\}$     &$\{3, 5, 8, 9\}$     &$\{2, 6, 8, 10\}$     &$\{3, 7, 10, 11\}$     &$\{2, 8, 10, 12\}$\\
$\{2, 3, 10, 11\}$     &$\{7, 8, 10, 12\}$     &$\{4, 6, 10, 12\}$     &$\{4, 8, 9, 12\}$     &$\{3, 7, 9, 10\}$\\
$\{3, 6, 9, 10\}$     &$\{2, 7, 9, 10\}$     &$\{1, 2, 4, 10\}$     &$\{2, 4, 9, 11\}$     &$\{1, 3, 4, 9\}$\\
$\{2, 6, 8, 9\}$     &$\{4, 5, 6, 7\}$     &$\{3, 5, 6, 7\}$     &$\{2, 7, 8, 9\}$     &$\{6, 9, 10, 11\}$\\
$\{1, 4, 7, 8\}$     &$\{3, 5, 8, 12\}$     &$\{1, 6, 8, 12\}$     &$\{1, 3, 7, 11\}$     &$\{1, 3, 6, 11\}$\\
$\{1, 2, 7, 12\}$     &$\{2, 3, 5, 11\}$     &$\{1, 6, 10, 11\}$     &$\{1, 7, 10, 12\}$     &$\{4, 5, 9, 11\}$\\
$\{1, 5, 9, 12\}$     &$\{6, 9, 11, 12\}$     &$\{2, 5, 9, 10\}$     &$\{1, 6, 7, 8\}$     &$\{6, 7, 8, 11\}$\\
$\{1, 5, 7, 8\}$     &$\{2, 3, 5, 9\}$     &$\{1, 2, 4, 6\}$     &$\{2, 4, 6, 9\}$     &$\{4, 6, 8, 10\}$\\
$\{2, 5, 8, 10\}$     &$\{4, 5, 8, 9\}$     &$\{3, 6, 7, 8\}$     &$\{3, 4, 5, 10\}$     &$\{1, 5, 6, 10\}$\\
$\{5, 7, 9, 11\}$     &$\{1, 5, 6, 9\}$     &$\{1, 3, 6, 9\}$     &$\{4, 7, 8, 10\}$     &$\{1, 3, 8, 11\}$\\
$\{1, 2, 5, 10\}$     &$\{3, 9, 10, 12\}$     &$\{5, 6, 9, 11\}$     &$\{1, 5, 7, 10\}$     &$\{5, 6, 10, 12\}$\\
$\{1, 3, 10, 12\}$     &$\{2, 3, 6, 12\}$     &$\{2, 4, 10, 12\}$     &$\{1, 3, 4, 8\}$     &$\{4, 6, 9, 10\}$\\
$\{3, 4, 6, 8\}$     &$\{6, 7, 10, 11\}$     &$\{5, 7, 9, 12\}$     &$\{2, 4, 5, 8\}$     &$\{4, 7, 8, 9\}$\\
$\{1, 2, 6, 7\}$     &$\{1, 4, 9, 10\}$     &$\{2, 9, 10, 11\}$     &$\{1, 2, 8, 12\}$     &$\{4, 8, 11, 12\}$\\
$\{1, 5, 8, 11\}$     &$\{1, 2, 5, 11\}$     &$\{1, 4, 6, 12\}$     &$\{5, 6, 8, 11\}$     &$\{1, 3, 6, 12\}$\\
$\{2, 7, 11, 12\}$     &$\{3, 5, 6, 11\}$     &$\{1, 3, 5, 7\}$     &$\{2, 5, 6, 12\}$     &$\{1, 3, 5, 12\}$\\
$\{2, 10, 11, 12\}$     &$\{2, 4, 7, 11\}$     &$\{4, 5, 7, 12\}$     &$\{3, 9, 11, 12\}$     &$\{4, 6, 11, 12\}$\\
$\{2, 4, 8, 11\}$     &$\{3, 4, 8, 12\}$     &$\{2, 3, 8, 11\}$     &$\{3, 5, 7, 9\}$     &$\{4, 6, 7, 11\}$\\
$\{2, 3, 7, 8\}$     &$\{3, 4, 10, 11\}$     &$\{1, 2, 3, 10\}$     &$\{1, 4, 7, 11\}$     &$\{2, 4, 9, 12\}$\\
$\{3, 4, 9, 11\}$     &$\{4, 5, 6, 8\}$     &$\{5, 9, 10, 12\}$     &$\{2, 3, 9, 12\}$     &$\{3, 4, 10, 12\}$\\
$\{5, 10, 11, 12\}$     &$\{6, 7, 9, 12\}$     &$\{6, 7, 10, 12\}$     &$\{5, 7, 10, 11\}$     &$\{2, 5, 7, 8\}$\\
$\{1, 4, 7, 12\}$     &$\{1, 3, 4, 5\}$     &$\{1, 7, 9, 10\}$     &$\{7, 8, 11, 12\}$     &$\{1, 2, 5, 12\}$\\
$\{2, 3, 8, 12\}$     &$\{5, 6, 8, 12\}$     &$\{4, 5, 11, 12\}$     &$\{1, 9, 11, 12\}$     &$\{3, 4, 7, 12\}$\\
$\{5, 7, 8, 12\}$     &$\{3, 5, 11, 12\}$     &$\{1, 2, 6, 11\}$     &$\{2, 6, 11, 12\}$     &$\{3, 7, 11, 12\}$\\
$\{8, 9, 10, 11\}$     &$\{1, 8, 9, 10\}$     &$\{1, 10, 11, 12\}$     &$\{8, 9, 10, 12\}$     &$\{1, 8, 11, 12\}$
\end{tabular}

\end{document}